\providecommand{\U}[1]{\protect\rule{.1in}{.1in}}
  \newcommand{\wt}{\widetilde}
  \newcommand{\gdeg}{G\text{\rm -deg}}
  \newcommand{\vp}{\varphi}
  \newcommand{\ve}{\varepsilon}
  \DeclareMathOperator{\id}{Id}
  \def\bn{\mathbb N}
\def\br{\mathbb R}
\def\bz{\mathbb Z}
\def\noi{\noindent}
  \newcommand{\vs}{\vskip .3cm}
  \newcommand\numberthis{\addtocounter{equation}{1}\tag{\theequation}}
  \newcommand\cU{\ensuremath{\mathcal U}}
  \newcommand\cV{\ensuremath{\mathcal V}}
  \newcommand\cW{\ensuremath{\mathcal W}}
  \newcommand\bbR{\ensuremath{\mathbb R}}
  \newcommand\bfV{\ensuremath{\mathbf V}}
  \newcommand\scrA{\ensuremath{\mathscr A}}
  \newcommand\scrE{\ensuremath{\mathscr E}}
  \definecolor{mygreen}{rgb}{0,.66,.05}
  \definecolor{lightyellow}{rgb}{1,1,.80}
  \lstdefinelanguage{GAP}{
    basicstyle=\ttfamily,
    keywords={true, false, function, return, fail, if, in, while, do, od, else, elif, fi, break, continue},
    keywordstyle=\color{blue}\bfseries,
    otherkeywords={% Operators
      >, <, ==
    },
    identifierstyle=\color{black},
    sensitive=True,
    comment=[l]{\#},
    commentstyle=\color{cyan},
    stringstyle=\color{red},
    morestring=[b]',
    morestring=[b]"
  }
 \theoremstyle{plain}
 \newtheorem{theorem}{Theorem}[section]
 \newtheorem{proposition}[theorem]{Proposition}
 \newtheorem{lemma}[theorem]{Lemma}
 \newtheorem{corollary}[theorem]{Corollary}
 \theoremstyle{definition}
 \newtheorem{remark}[theorem]{Remark}
\begin{document}
  \title[Symmetries of Periodic Solutions to Non-Autonomous   Delayed  Systems]{Existence and Spatio-Temporal Patterns of  Periodic Solutions to  Second Order Non-Autonomous  Equivariant Delayed  Systems}
  \author[Z.Balanov --- W. Krawscewicz --- N. Hirano ---X. Ye]{Zalman Balanov --- Wies{\l}aw Krawcewicz --- Norimichi Hirano --- Xiaoli Ye$^*$}
  
  \address
{\textsc{Zalman Balanov}\\
Department of Mathematics\\ Xiangnan University\\
Chenzhou, Hunan 423000, China, and\\
Department of Mathematical Sciences\\
University of Texas at Dallas\\
Richardson, TX 75080, USA}

\email{balanov@utallas.edu}

\address
{\textsc{Wieslaw Krawcewicz}\\
Applied Mathematics Center at Guangzhou University\\
Guangzhou 510006, China, and\\
Department of Mathematical Sciences\\
University of Texas at Dallas\\
Richardson, TX 75080, USA.}

\email{wieslaw@utallas.edu}

\address
{\textsc{Norimichi Hirano}\\
Graduate School of Environment and Information Sciences, \\
Yokohama National University, Yokohama, Japan. }

\email{hira0918@gmail.com}  

\address
{\textsc{Xiaoli Ye}\\
Department of Mathematical Sciences\\
University of Texas at Dallas\\
Richardson, TX 75080, USA}

\email{xiaoli.ye@utdallas.edu}
  \newcommand{\support}{The second author was supported by the National Natural
  	Science Foundation of China (Grant No.~11871171).}
  
  \subjclass[2010] {Primary: 39A23; Secondary:  37J45,  34K13, 37C80, 47H11}

\keywords{periodic solutions, delay equations, reversible system,  equivariant
Brouwer degree}
  
  \thanks{$^*$ Corresponding author.}
  \thanks{The first author acknowledges the support from Xiangnan University. The second author acknowledges the support from National Science Foundation of China through the grant no. 11871171, and the support from Guangzhou University. }
  
 \maketitle  

\begin{abstract} 
%In this paper, we study the existence of $2\pi$-periodic solutions to non-autonomous second order system with multiple delays
%\[
%\ddot x(t) = f(t,x(t),x(t-\tau_1),\dots, x(t-\tau_m), \dot x(t), \quad x(t))\in \bm V:=\br^n,
%\]
%where $\bm V$ is a permutation representation of a finite group $\Gamma$.
%Under the assumptions that $f$ is $\Gamma$-equivariant, satisfies the Hartman-Bernstein-Nagumo growth conditions, has a non-%degenerate linearization at zero, and satisfies some additional symmetry conditions, we establish the existence result for non-constant  %$2\pi$-periodic solutions with prescribed spatio-temporal symmetries. An example for the dihedral group $\Gamma=D_n$ is also %presented.

Existence and spatio-temporal symmetric patterns of periodic solutions to second order reversible equivariant non-autonomous periodic systems with multiple delays are studied under the Hartman-Nagumo growth conditions. The method is based on    
using the Brouwer $D_1 \times \mathbb Z_2\times  \Gamma$-equivariant degree theory, where
$D_1$ is related to the reversing symmetry, $\mathbb Z_2$ is related to the oddness of the right-hand-side and $\Gamma$ reflects the
symmetric character of the coupling in the corresponding network. 
Abstract results are supported by a concrete example with $\Gamma = D_n$ -- the dihedral group of order $2n$.

\end{abstract} 

\medskip
\noindent  
{\it 2010 AMS Mathematics Subject Classification:} 34K13, 37J45, 39A23, 37C80, 47H11.

\medskip
\noindent
{\it Key Words:} Second order delay-differential equations, multiple delays, Hartman-Nagumo condition, periodic solutions,  Brouwer equivariant degree, Burnside ring, reversible systems.

%\maketitle 

\section{Introduction}

\subsection{Subject and goal} For a long time, the classical forced pendulum equation $\ddot{y} + a \sin(y) = b\sin(t)$ together with its multiple generalizations stand as an important subject, where modern methods and techniques of Nonlinear Analysis are tested (see, for example, \cite{Mawhin2,Mawhin1}).  Among them, the variational methods (Lusternik-Schnirelmann Theory, Morse Theory) and  topological ones (based on different variants of degree theory) have taken a firm position. For the systematic variational treatment of second order Hamiltonian ODEs, we refer  to the monographs \cite{MawhinWillem,Rabinowitz} and references therein.  In \cite{LiHe} (see also \cite{GuoYu}), the variational approach was applied for the first time to autonomous second order delay differential equations (in short, DDEs); for non-autonomous second order periodic DDEs (in short, NASODDEs), we refer to \cite{MengZhang,WuWuZhou,YuXiao} and references therein. Also,
the case of multiple commensurate periods (i.e., the delays being integer multiples of a given number) was studied in \cite{ChenTangSun}, where one can find detailed historical and bibliographical remarks on the subject. 

The applications of degree theory based methods require a priori estimates of solutions in appropriate function spaces. These estimates can be guaranteed if the right-hand side of a NASODDE satisfies certain  growth restrictions at infinity provided, for example, by the so-called Hartman-Nagumo condition (see, for example,  \cite{Hart,Knobloch,BebernesSchmitt,Amster-Haddad} and references therein for ODEs and  \cite{GainesMawhin} for (scalar) FDEs independent of the first derivative).
% and \cite{EloeRaffoulTisdell} for the existence and uniqueness). 

If $k$ identical oscillators described by a NASODDE are coupled $\Gamma$-symmetrically, where $\Gamma$ stands for a finite group, then symmetries of the coupling are reflected in the resulting system in the form of the $\Gamma$-equivariance of the right-hand side, i.e. it commutes with the $k$-dimensional permutation $\Gamma$-representation canonically corresponding to the coupling.  Also, 
a $\Gamma$-coupled system may exhibit additional symmetries: reversibility (cf. \cite{BW,KW}), oddness/evenness, to mention 
a few. Studying the existence of periodic solutions to such systems together with their spatio-temporal symmetric patterns, being a problem of formidable complexity, has not been given enough attention (especially, in non-variational setting). It turns out that there is a topological  tool -- the so-called {\it Brouwer equivariant degree} (see, for example, \cite{KW,AED}) --  allowing one to effectively study the problem in question in many cases. The {\it goal} of the present paper is to open a door to a systematic usage of this tool to study symmetric patterns of periodic solutions to
equivariant NASODDEs.

\medskip

\subsection{Methodology}  Brouwer degree (in compliance with the guiding function method, see \cite{Krasnosel-guiding}) as well as its different infinite dimensional generalizations (Leray-Schauder degree \cite{KZ}, coincidence degree \cite{GainesMawhin}, etc.) proved their efficiency in detecting periodic solutions to non-autonomous systems.  Theoretically, these degrees can be used in symmetric setting. 
%to detect periodic solutions with prescribed symmetry. 
To be more specific,  given an  orthogonal $G$-representation  $V$ (here  $G$ stands for a compact Lie group) and an admissible $G$-pair  $(f,\Omega)$  in $V$
(i.e., $\Omega\subset V$ is an open bounded $G$-invariant set and $f:V\to V$ a $G$-equivariant map without zeros on $\partial \Omega$), the Brouwer degree $d_H:=\deg(f^H,\Omega^H)$ is 
%\textcolor{blue}{\bf $f^H$, $\Omega^H$ have not been defined} 
well-defined for any $H \le  G$ (here $\Omega^H:= \{x \in \Omega\, :\, hx = x\; \forall h \in H\}$ 
and $f^H:= f|_{\Omega^H}$). If for some $H$, one has $d_H\not=0$, then the existence of solutions with symmetry at least $H$ to equation $f(x)=0$ in $\Omega$ is guaranteed (similar argument can be used for infinite dimensional versions of the Brouwer degree). This approach provides a  way to determine the existence of solutions in $\Omega$, and to distinguish their different orbit types. However, this approach may be inefficient and very exhausting in the case the space $V$ contains a large number of orbit types. 

%it comes at a price of elaborate $H$-fixed-point space computations which can be a rather challenging task.  

%\medskip

%\noi{\bf (b) Method.} Our 

As it was mentioned above, our method is based on the usage of the Brouwer equivariant degree theory; 
 for the detailed exposition of this theory, we refer to the monographs
 \cite{AED, KW,IV-book,KB} and survey \cite{survey} (see also \cite{BKLN,BLN}). 
In short, the equivariant degree is a topological tool allowing ``counting'' orbits of solutions to symmetric equations in the same way as the usual Brouwer degree does, but according to their symmetry properties. 
%In the present paper, we use an alternative method %is 
%based on the %usage of 
%the Brouwer equivariant degree theory 
%(cf. \cite{AED, survey}; see also \cite{BDZ}).  
To be more explicit, the equivariant degree $\gdeg(f,\Omega)$ is an element of the free $\bz$-module $A_0(G)$ generated by the conjugacy classes $(H)$ of subgroups $H$ of $G$ with a finite Weyl group $W(H)$:
\begin{equation}\label{eq:gdeg}
\gdeg(f,\Omega)=\sum_{(H)} n_H\, (H), \quad n_H\in \bz,
\end{equation} 
where the coefficients $n_H$ are given by the following recurrence formula
\begin{equation}\label{eq:rec}
n_H=\frac{d_H-\sum_{(L)>(H)} n_L \,n(H,L)\, |W(L)|}{|W(H)|},
\end{equation}
and  $n(H,L)$ denotes the number of subgroups $L'$ in $(L)$ such that $H\le L'$ (see \cite{AED}).  Also, we use the  notation 
\begin{equation}\label{eq:coeffi}
\text{coeff}^{H}(a):= n_H \;\ \text{for any} \;\;  a = \sum_{(H)} n_H(H) \in A_0(G). 
\end{equation}
One can immediately recognize a connection 
between the two collections: $\{d_H\}$ and  $\{ n_H\}$, where $H \le  G$ and $W(H)$ is finite. 
As a matter of fact,  $\gdeg(f,\Omega)$ satisfies the standard properties expected from any topological degree.  
However, there is one additional functorial property, which plays a crucial role in computations, namely the {\it multiplicativity property}. In fact, $A_0(G)$ has a natural structure of a ring  (which is called the {\it Burnside ring} of $G$), where the multiplication  $\cdot:A_0(G)\times A_0(G)\to A_0(G)$   is defined on generators by $(H)\cdot (K)=\sum_{(L)} m_L\, (L)$ with 
\begin{equation}\label{eq:mult}
m_L:=|(G/H\times G/K)_{(L)}/G|, \quad \text{ where } W(L) \text{ is finite}.
\end{equation}
The multiplicativity property for two admissible $G$-pairs $(f_1,\Omega_1)$ and   $(f_2,\Omega_2)$ means the following equality:
\begin{equation}\label{eq:mult-property}
 \gdeg(f_1\times f_2,\Omega_1\times \Omega_2)=  \gdeg(f_1,\Omega_1)\cdot  \gdeg(f_2,\Omega_2).
\end{equation}
Given a $G$-equivariant linear isomorphism $A : V \to V$, formula \eqref{eq:mult-property} combined with the equivariant spectral decomposition of $A$, reduces the computations of $\gdeg(A,B(V))$ to the computation of the so-called basic degrees $\deg_{\cV_i}$, 
which can  be ``prefabricated" in advance for any group $G$ (here $\deg_{\cV_i}:=\gdeg(-\id, B(\cV_i))$, where $\cV_i$ is an irreducible $ G$-representation and $B(X)$ stands for the unit ball in $X$).  All in all, the product property of the Brouwer equivariant degree provides a huge computational advantage in comparison with the usual Brouwer degree approach. In order to facilitate the usage of  the Brouwer  equivariant degree, a  package {\tt EquiDeg} for GAP programming allowing  effective application of the equivariant degree methods, was created by Hao-Pin Wu. This package, being available  from  \url{https://github.com/psistwu/GAP-equideg} (see  \cite{Pin}), is behind our numerical example
with dihedral symmetries considered in Subsection \ref{subsec-Didedral-example}. 

\medskip

\subsection{Overview} After the Introduction, the paper is organized as follows. In Section \ref{sec:-setting-symmetries}, we describe a class of symmetric
NASODDEs of our interest (see equation \eqref{eq:delay-eq}  and conditions \eqref{eq:tauj} and {\rm (A0)}--{\rm (A6)}), and clarify the role of several symmetry subgroups in studying symmetric patterns of periodic solutions. In Section \ref{sec:app3.2}, we establish a priori bound for solutions to problem  \eqref{eq:delay-eq} in the space $C^2(S^1;\bfV)$. In Section \ref{sec:operator-reform}, we reformulate problem  \eqref{eq:delay-eq} as a 
$G:=D_1 \times  \mathbb Z_2 \times \Gamma$-equivariant fixed point problem  in $C^2(S^1;\bfV)$ (see \eqref{eq:fun1}--\eqref{eq:fun2} and 
conditions {\rm (A4)},  {\rm (A6)}), and present an abstract equivariant degree based result (see Proposition \ref{th:app3:abstract}; $D_1$ is related to the reversing symmetry, $\mathbb Z_2$ is related to the oddness of the right-hand-side and $\Gamma$ reflects the
symmetric character of the coupling in the corresponding network ). 
This result can be effectively applied to concrete symmetric systems only if effective tools for the computation of the equivariant degree associated can be elaborated. The latter is a subject of Section \ref{sec:degree-computation}, where we combine the multiplicativity property of the equivariant degree with appropriate  equivariant spectral data of the operators involved. Based on that, we present our main symmetric result (see Theorem \ref{th:app3:main}; see also Corollary \ref{cor:app3:non-equi-main}
dealing with the non-symmetric setting). In Section \ref{sec:app3.3}, we explicitly describe a wide class of NASODDEs satisfying hypotheses of Theorem \ref{th:app3:main} and provide an illustrating example  with the group $\Gamma = D_n$ -- the dihedral group of order $2n$ (see Corollary 
\ref{cor-Dihedral}).
We conclude the paper with an Appendix related to the equivariant topology jargon and equivariant degree background.

\section{Setting and symmetries}\label{section:setting-symmetries}\label{sec:-setting-symmetries}

\subsection{Setting}\label{subsec:setting}

Take the Euclidean space  $\bfV=\bbR^n$  and  for given $m\in \bn$,  put
\[ 
\bfV^m:=\mathop{\underbrace{ \bfV\times \dots\times \bfV}}\limits_{m\times}.
\]
For  $\bold y\in \bfV^m$, denote  $\bold y:=(y^1,y^2,\dots, y^m)$, $y^j\in \bm V$, $j = 1,...,m$, and define 
\begin{equation}\label{eq:max-norm}
|\bold y|:=\max\{|y^1|,|y^2|,\dots, |y^m|\}.
\end{equation}
Assume  $\Gamma$ is a subgroup of the symmetric group $S_n$.  The group $\Gamma$ acts on vectors $x=(x_1,x_2,\dots,x_n)$ in $V$ by permuting their coordinates, i.e. 
for $\gamma\in \Gamma$, one has: 
\[\gamma(x_1,x_2,\dots,x_n):=(x_{\gamma(1)}, x_{\gamma(2)},\dots, x_{\gamma(n)}).\]
This way  $\bm V$ becomes an orthogonal $\Gamma$-representation. 
Then, the space  $\bfV^m$ equipped with the diagonal $\Gamma$-action given by $\gamma(y^1,...,y^m):= 
(\gamma  y^1,..., \gamma  y^m)$ is an orthogonal  $\Gamma$-representation.  

Take reals  $\tau_j$, $j=0, 1,2,\dots, m$, satisfying 
\begin{equation}\label{eq:tauj}
0=\tau_0<\tau_1<\dots< \tau_m<2\pi, \quad \tau_{m-j+1}=2\pi -\tau_j \;\;\text{  for $j=1,2,\dots,m$}.
\end{equation}
Notice that for  $\zeta_j:=e^{i\tau_j}$, $j=1,2,\dots,m$,  condition \eqref{eq:tauj} implies  $\overline{\zeta_j}=\zeta_{m-j+1}$.

\medskip

Take a function $f:\br \times\bfV \times\bfV^m\times \bfV\to \bfV$   
%be  a continuous   $2\pi$-periodic function with respect to the first  variable $t$,
%i.e.  $f(t+2\pi, x, \bold y, z) = f(t, x, \bold y, z)$ for all $x$, $z\in \bfV$, $\bold y\in \bfV^m$. C
and consider the following system of 
%\newline 
NASODDEs:
%In this paper we are interested in finding non-zero continuously twice differentiable $2\pi$-solutions in the following delayed system of %ODEs:
\begin{equation}\label{eq:delay-eq} 
\ddot x(t)= f(t, x(t), \bold x_t, \dot x(t)), \;\; t\in \br,\\
\end{equation}
where for a function  $x:\br\to \bfV$, we put 
\[
\bold x_t:=(x(t-\tau_1),x(t-\tau_2), \dots, x(t-\tau_m))\in \bfV^m, \quad t\in \br.
\]
\vs

We will assume that $f$ satisfies the following conditions:
\begin{itemize}
\item[(A0)] $f$ is continuous  and 
%$2\pi$-periodic function with respect to the first  variable $t$,
%i.e.  
$f(t+2\pi, x, \bold y, z) = f(t, x, \bold y, z)$ for all $t \in \mathbb R$, $x$, $z\in \bfV$, $\bold y\in \bfV^m$;
 
\item[(A1)] There exists $R>0$ such that for all $t\in \br$, for all $x$, $z\in \bfV$, $\bold y\in \bfV^m$, one has:
\[ 
|x|\ge R, \;|\bold y|\le |x|\; \text{ and } \; x\bullet z=0 \;\;  \;\;x\bullet f(t,x,\bold y,z) >0;
\]

\item[(A2)] There exists a continuous function $\phi:[0,\infty)\to (0,\infty)$ such that 
\[
 \int_0^\infty\frac{s\,ds}{\phi(s)} =\infty\quad  \text{ and }\;\;\;
\forall_{t\in \br}\; \forall_{|x|\le R}\; \forall_{|\bold y|\le R}\;\forall_{z}\;\;  |f(t,x, \bold y,z)|\le \phi(|z|);
\]
\item[(A3)] There are constants $\alpha>0$, $K>0$ such that
\[
\forall_{t\in \br}\;\forall_{|x|\le R}\;\forall_{|\bold y|\le R}\;\forall_{z\in \bfV}\;\;\;\; |f(t,x,\bold y,z)|\le \alpha (x\bullet f(t,x,\bold y, z) +|z|^2) +K; 
\]

\item[(A4)]  For all $x$, $z\in \bfV$, $\bold y=(y^1,y^2,\dots,y^m)\in \bfV^m$, one has:

\begin{itemize}
\item[(i)] $f(t,x,\bold y,-z) = f(t,x,\bold y,z)$,

\item[(ii)] $f(-t,x,\bold y,z) = f(t,x,\bold y,z)$,

\item[(iii)] $f(t,x,y^1,y^2,\dots, y^{m-1},y^m,z) = f(t,x,y^m,y^{m-1},\dots, y^2, y^1,z)$,

\item[(iv)] $f(t,-x,-\bold y,-z) = -f(t,x,\bold y,z)$;
\end{itemize}

\item[(A5)]  There exist $n\times n$-matrices $A_j$, $j=0,1,\dots, m$, such that 
\[
\forall_{t\in \br}\;\;\; \lim_{(x,\bold y,z)\to 0}\frac{f(t,x,\bold y,z)- A_0x-\sum_{j=1}^m A_jy^j}{|(x,\bold y,z)|}=0;
\]

\item[(A6)]  
%$f$ is $\Gamma$-equivariant, i.e.  
$f(t,\gamma x,\gamma \bold y,\gamma z)= \gamma f(t,x,\bold y,z)$  for all $\gamma\in \Gamma$, $x$, $z\in \bfV$, $\bold y\in \bfV^m$. 

\end{itemize}

\begin{remark}\label{rem:cond-setting}
%\begin{itemize}
%\item[(i)]
(i) (A0) is the standard condition for looking for (classical) periodic solutions to the non-autonomous system. Conditions (A1)--(A3) are classical Hartman-Nagumo conditions required to establish a priori bounds for periodic solutions to  \eqref{eq:delay-eq} (cf. \cite{Hart,GainesMawhin}).  The first three conditions in (A4) imply that system \eqref{eq:delay-eq} is time-reversible. The last condition in (A4) together with condition (A6) imply that system  \eqref{eq:delay-eq} is 
$\mathbb Z_2 \times \Gamma$-symmetric. Finally, condition (A5) guarantees the existence of linearization for  \eqref{eq:delay-eq} at zero, which is {\it independent of time $t$}. 

(ii) There are many  examples of systems  \eqref{eq:delay-eq} of practical meaning (with multiple delays) satisfying \eqref{eq:tauj}, (A0)--(A6). For instance,  one can consider time delay systems with {\it commensurate delays} which  play an important role in robust control theory (see, for example, \cite{GKhCh} and references therein). Notice that in our case, the delays $\tau_j$ are not necessarily commensurate (i.e. $\tau_j\not=\frac{2\pi j}m$).
%\end{itemize}
\end{remark} 
Under the conditions \eqref{eq:delay-eq}, (A1)--(A6), assuming, in addition, that the linearization at zero of system \eqref{eq:delay-eq} is non-degenerate, our aim is to establish, using equivariant spectral properties of the matrices $A_j$, the existence of multiple non-constant $2\pi$-periodic solutions with prescribed spatio-temporal symmetries.
\vs

\subsection{Symmetries}\label{subsec:symmetries}\label{sub-sec-symmetries} 
We will study periodic solutions to \eqref{eq:delay-eq} as fixed points of the operator
associated with  \eqref{eq:delay-eq} and defined in the space  $\mathscr E:=C_{2\pi}^2(\br; \bfV)$ of $C^2$-smooth periodic functions, equipped with the norm 
\begin{equation}\label{eq:space-mathscr-E}
\|u\|:= \max\{ \|u\|_\infty,\|\dot u\|_\infty, \|\ddot u\|_\infty\}, \quad \| u\|_\infty:=\sup\{|u(t)|: t\in \br\},\quad u\in \mathscr E. 
\end{equation}
%where $u:\br \to \bfV$ is a continuous $2\pi$-periodic function. 
%The space $\mathscr E$ admits the natural isometric representation of the group 
Put
\begin{equation}\label{eq:group-mathcal-G-general} 
\mathcal G:= O(2) \times \mathbb Z_2 \times \Gamma,
\end{equation}
where 
\begin{equation}\label{eq:group-mathcal-G}
O(2) = S^1 \cup S^1 \kappa, \;\; S^1= \Big\{
%\begin{equation}
e^{i\theta}\simeq \begin{bmatrix}
   \cos(\theta) &-\sin(\theta)\\
   \sin(\theta)  & \cos(\theta)
 \end{bmatrix}, \; \theta \in [0,2\pi) \Big\}, \; 
 \kappa = 
 \begin{bmatrix}
   1 & 0\\
   0  & -1
 \end{bmatrix}
 %\end{equation}\} 
\end{equation}
and
\begin{equation}\label{eq:Z-2}
 \bz_2:=\{1,-1\}.
\end{equation}
Then,  $\mathscr E$ admits the natural isometric $\mathcal G$-representation given by
\begin{equation}\label{eq:mathcal-G-action}    
(e^{i\theta},\pm 1,\gamma)u(t) = \pm \gamma u(t + \theta), \quad (e^{i\theta}\kappa,\pm 1,\gamma)u(t) = \pm \gamma u(-t + \theta), \quad  t\in \br.
\end{equation}
Each of the following four subgroups of $\mathcal G$ will play an important role in what follows:

\medskip

$\bullet$ $D_1:=\{1,\kappa\} < O(2)$: this subgroup is related to the reversibility of system \eqref{eq:delay-eq};

\medskip

$\bullet$ $G:= D_1 \times \mathbb Z_2 \times \Gamma$: this subgroup classifies spatio-temporal symmetries of periodic solutions to
\eqref{eq:delay-eq} and as such gives rise to the usage of the Brouwer $G$-equivariant degree;

\medskip

$\bullet$ $D_1^z = \{(1,1,e), (\kappa,-1,e)\} < D_1 \times \bz_2 \times \{e\}$: this subgroup allows us to distinguish between constant and non-constant periodic solutions to \eqref{eq:delay-eq} (here $e$ stands for the neutral element in $\Gamma$);

\medskip

$\bullet$ $\mathfrak G := O(2) \times \{1\} \times \Gamma$: this subgroup allows us to effectively describe equivariant spectral data contributing to the computation of the $G$-equivariant degree associated to the linearization of \eqref{eq:delay-eq} at zero (by condition {\rm (A5)}, this linearization is {\it autonomous}, and as such admits a $\mathfrak G$-equivariant treatment);  at the same time, since \eqref{eq:delay-eq} is a {\it non-autonomous} system, the $S^1$-component  of $O(2)$ is irrelevant to classifying spatio-temporal patterns of periodic solutions to   \eqref{eq:delay-eq} (leaving a scene to $D_1$).

\vs 

\section{A Priori Bound}\label{sec:app3.2}

%\subsection{A Priori Bound for Second Order Delayed System}
\subsection{Auxiliary lemmas} The  {\it a priori} bound for periodic solutions to a second order system of ODEs was derived in the classical book by P. Hartman \cite{Hart} (see also \cite{Knobloch,Nagumo,BebernesSchmitt,Amster-Haddad}). The argument required for our setting (including multiple delays) is close to the one used to treat ODEs. However, since to the best of our knowledge, there is no paper
combining (multiple) delays with the Hartman-Nagumo condition, we decided, for the sake of completeness, to include a proof covering this case. {\bf Hereafter},  $p:=2\pi$.

\vs 
\begin{theorem}\label{th:apriori} Let $f:\br \times\bfV \times\bfV^m\times \bfV\to \bfV$ satisfy {\rm (A0)---(A3)}.
% and denote $p:=2\pi$. 
Then, there exists a constant $C > 0$ such that  any $p$-periodic solution $x(t)$ to system \eqref{eq:delay-eq}  satisfies 
\begin{equation}\label{eq:ap-1}
\forall_{t\in \br}\;\;\;|x(t)|,\,|\dot x(t)|,\,|\ddot x(t)|<C.
\end{equation} 
\end{theorem}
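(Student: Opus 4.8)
The plan is to establish the three bounds in \eqref{eq:ap-1} successively, first for $|x|$, then for $|\dot x|$, and finally for $|\ddot x|$, each feeding into the next. Throughout, $x$ denotes a fixed $p$-periodic $C^2$ solution of \eqref{eq:delay-eq}, so every quantity below attains its supremum on one period, and I aim for a constant $C$ depending only on the data $R,\phi,\alpha,K,p$ and $f$ (not on the particular solution).

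First I would bound $\|x\|_\infty$ by a maximum-principle argument resting on {\rm (A1)}. Let $t_0$ be a point where $|x(t)|$ is maximal and put $M:=|x(t_0)|$; suppose $M>R$. Differentiating $\psi(t):=\tfrac12|x(t)|^2$ twice, the first-order condition $\dot\psi(t_0)=0$ reads $x(t_0)\bullet\dot x(t_0)=0$, while the second-order condition gives $\ddot\psi(t_0)=|\dot x(t_0)|^2+x(t_0)\bullet f(t_0,x(t_0),\mathbf x_{t_0},\dot x(t_0))\le 0$. The delay is handled exactly here: since $t_0$ is a \emph{global} maximum of $|x|$, one has $|\mathbf x_{t_0}|=\max_j|x(t_0-\tau_j)|\le|x(t_0)|$, so the hypothesis $|\mathbf y|\le|x|$ of {\rm (A1)} holds and forces $x(t_0)\bullet f>0$, hence $\ddot\psi(t_0)>0$, a contradiction. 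Thus $M\le R$, so $|x(t)|\le R$ and consequently $|\mathbf x_t|\le R$ for all $t$.

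The core of the argument is the bound on $\|\dot x\|_\infty$, and this is where I expect the main difficulty: in the periodic, vector-valued setting there is no a priori point at which $\dot x$ vanishes, so I must first manufacture a point where $|\dot x|$ is controlled, and this is precisely the role of {\rm (A3)}. Integrating $\frac{d}{dt}(x\bullet\dot x)=|\dot x|^2+x\bullet f$ over one period and using periodicity gives $\int_0^p x\bullet f\,dt=-\int_0^p|\dot x|^2\,dt$; inserting this into the pointwise inequality of {\rm (A3)} (legitimate since $|x|,|\mathbf x_t|\le R$) makes the $x\bullet f$ and $|\dot x|^2$ terms cancel and yields $\int_0^p|f|\,dt\le pK$, whence $\int_0^p|\dot x|^2\,dt\le R\int_0^p|f|\,dt\le RpK$. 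The mean value theorem then produces a point $t_1$ with $|\dot x(t_1)|\le\sqrt{RK}=:\mu$, and Cauchy--Schwarz gives $\int_0^p|\dot x|\,dt\le p\sqrt{RK}$. Now the Nagumo condition {\rm (A2)} takes over. With $\rho:=|\dot x|$ and $\Phi(r):=\int_0^r\frac{s\,ds}{\phi(s)}$, the bound $|\ddot x|=|f|\le\phi(\rho)$ gives, wherever $\rho>0$, $\frac{d}{dt}\Phi(\rho)=\frac{\rho\,\dot\rho}{\phi(\rho)}=\frac{\dot x\bullet f}{\phi(\rho)}\le|\dot x|$; integrating this between $t_1$ and a point where $\rho$ is maximal yields $\Phi(\|\dot x\|_\infty)\le\Phi(\mu)+\int_0^p|\dot x|\,dt\le\Phi(\mu)+p\sqrt{RK}$. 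Since $\int_0^\infty\frac{s\,ds}{\phi(s)}=\infty$, the map $\Phi$ is an increasing bijection of $[0,\infty)$, so this bounds $\|\dot x\|_\infty$ by a constant $M_1$ depending only on $R,K,p,\phi$.

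With $|x|,|\mathbf x_t|\le R$ and $|\dot x|\le M_1$ secured, the bound on $\|\ddot x\|_\infty$ is immediate: since $\ddot x(t)=f(t,x(t),\mathbf x_t,\dot x(t))$ and, by {\rm (A0)}, $f$ is continuous and $p$-periodic in $t$, its restriction to the compact set $[0,p]\times\overline{B(0,R)}\times\overline{B(0,R)}^{\,m}\times\overline{B(0,M_1)}$ is bounded, say by $C_3$, giving $|\ddot x(t)|\le C_3$. Choosing $C$ strictly larger than $\max\{R,M_1,C_3\}$ then yields \eqref{eq:ap-1} uniformly over all $p$-periodic solutions. The one genuinely delicate step is the derivative bound, where periodicity, the structural inequality {\rm (A3)}, and the Nagumo growth control {\rm (A2)} must be combined; notably, the delay enters only benignly, through the inequality $|\mathbf x_t|\le R$ already established in the first step, rather than as an obstruction.
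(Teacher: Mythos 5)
Your proof is correct, and its first and third steps coincide with the paper's (Lemma \ref{lem:1} is exactly your maximum-principle argument, including the key observation that a global maximum of $|x|$ forces $|\mathbf x_{t_0}|\le|x(t_0)|$ so that the delay is harmless). The middle step, however, takes a genuinely different route from the paper's Lemma \ref{lem:2}. The paper never integrates {\rm (A3)} over a full period: it uses the pointwise consequence $|\ddot x(t)|\le\tfrac{\alpha}{2}\ddot r(t)+K$ (with $r=|x|^2$), integrates this against the kernel $(t+p-s)$ to obtain the pointwise inequality $|\dot x(t)|\le\tfrac12Kp-\tfrac{\alpha}{2}\dot r(t)$ together with the anchor $|\dot x(0)|\le\tfrac12Kp$ (gotten by running the weighted estimate forwards and backwards in time and adding), and only then feeds this pointwise bound into the Nagumo quotient, arriving at $\Phi(|\dot x(t)|)\le\tfrac12Kp^2+\alpha R^2+\Phi(\tfrac12Kp)$. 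You instead integrate {\rm (A3)} over one period so that the identity $\int_0^p x\bullet f\,dt=-\int_0^p|\dot x|^2\,dt$ annihilates the $\alpha$-term, yielding $\int_0^p|f|\,dt\le pK$ and $\int_0^p|\dot x|^2\,dt\le RpK$; the mean-value anchor $|\dot x(t_1)|\le\sqrt{RK}$ and the Cauchy--Schwarz bound $\int_0^p|\dot x|\,dt\le p\sqrt{RK}$ then drive the same Nagumo argument. Both arguments are sound and of comparable rigor (the a.e.\ differentiability of $|\dot x|$ at its zeros is glossed over equally in both). What yours buys is a final bound $\Phi^{-1}\bigl(\Phi(\sqrt{RK})+p\sqrt{RK}\bigr)$ that is independent of the constant $\alpha$; what the paper's buys is a fully pointwise chain of estimates in the spirit of Hartman's original ODE treatment, with the explicit bound $|\dot x(0)|\le\tfrac12Kp$ at a prescribed time.
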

\vs 
To prove Theorem \ref{th:apriori}, we need the following lemmas.
\vs

\begin{lemma}\label{lem:1} Under the assumptions {\rm (A0)---(A3)}, any $p$-periodic solution $x(t)$ to \eqref{eq:delay-eq}  satisfies 

\[\forall_{t\in \br}\;\;\; |x(t)| \leq R,\]
where the constant $R$ is given in condition {\rm (A1)}.
\end{lemma}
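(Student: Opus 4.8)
The plan is to run the classical Hartman maximum-principle argument, adapted to absorb the delay terms. I would introduce the auxiliary scalar function $r(t) := |x(t)|^2 = x(t)\bullet x(t)$, which inherits $C^2$-smoothness and $p$-periodicity from $x$. Differentiating twice yields $\dot r(t) = 2\,x(t)\bullet\dot x(t)$ and, after substituting the equation \eqref{eq:delay-eq}, $\ddot r(t) = 2|\dot x(t)|^2 + 2\,x(t)\bullet f(t,x(t),\mathbf{x}_t,\dot x(t))$. The whole proof then hinges on evaluating this second derivative at a well-chosen point.

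Arguing by contradiction, I would suppose $\max_{t}|x(t)| > R$. Since $r$ is continuous and $p$-periodic, it attains its \emph{global} maximum at some $t_0$, at which $\dot r(t_0)=0$ and $\ddot r(t_0)\le 0$ (periodicity makes every point interior on the circle, so the second-order necessary condition applies). From $\dot r(t_0)=0$ I read off the orthogonality relation $x(t_0)\bullet\dot x(t_0)=0$, and by the choice of $t_0$ one has $|x(t_0)|>R$.

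The crucial point — and essentially the only place where the delay genuinely enters — is verifying the premise of condition {\rm (A1)} at $t_0$. Because $t_0$ is a global maximum of $|x(\cdot)|$, every delayed value satisfies $|x(t_0-\tau_j)|\le |x(t_0)|$, so by the max-norm \eqref{eq:max-norm} we get $|\mathbf{x}_{t_0}| = \max_j|x(t_0-\tau_j)|\le |x(t_0)|$. Combined with $|x(t_0)|\ge R$ and $x(t_0)\bullet\dot x(t_0)=0$, this is exactly the antecedent of {\rm (A1)}, which therefore forces $x(t_0)\bullet f(t_0,x(t_0),\mathbf{x}_{t_0},\dot x(t_0)) > 0$. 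Substituting into the formula for $\ddot r(t_0)$ gives $\ddot r(t_0) = 2|\dot x(t_0)|^2 + 2\,x(t_0)\bullet f(t_0,x(t_0),\mathbf{x}_{t_0},\dot x(t_0)) > 0$, contradicting $\ddot r(t_0)\le 0$. Hence $\max_t|x(t)|\le R$, as claimed.

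I expect the main subtlety, relative to the ODE case, to be precisely the control of the delayed arguments: the sign condition {\rm (A1)} carries the premise $|\mathbf{y}|\le|x|$, and this premise is secured only because we test at the \emph{global} (not merely local) maximum of $|x|$, placing all delayed states inside the ball of radius $|x(t_0)|$. Everything else is the routine maximum-principle computation; note in particular that the growth hypotheses {\rm (A2)}--{\rm (A3)} are not needed for this bound and enter only later to estimate $\dot x$ and $\ddot x$.
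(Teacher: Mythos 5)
Your proof is correct and follows essentially the same route as the paper: introduce $r(t)=|x(t)|^2$, evaluate $\dot r$ and $\ddot r$ at a global maximum $t_0$, and use (A1) to contradict $\ddot r(t_0)\le 0$. In fact you spell out the one step the paper leaves implicit — that testing at the \emph{global} maximum is what guarantees $|\mathbf{x}_{t_0}|\le|x(t_0)|$, the delayed-argument premise of (A1) — so your write-up is, if anything, more complete than the original.
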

	\begin{proof} Assume that  $x(t)$ is a $p$-periodic solution to \eqref{eq:delay-eq} and consider the function $r(t):=|x(t)|^2$, $t\in \br$. Obviously,
\begin{equation}\label{eq:2-3}
\dot r(t) =2x(t)\bullet \dot x(t), \qquad \ddot r(t) =2 x(t)\bullet \ddot x(t)+2|\dot x(t)|^2.
%\label{eq:2} \ddot r(t)&=2 x(t)\bullet \ddot x(t)+2|\dot x(t)|^2,
%\label{eq:3}
\end{equation} 
Assume that $r(t)$ achieves its maximum at $t_{o}$. Then,  $\dot r(t_o) =0$ and $\ddot r(t_o)\le 0$, which by condition (A1), implies that $|x(t_{o})|\le R$. Therefore, $ |x(t)| \leq R$ for all $t\in \br$.
\end{proof}

\vs
\begin{lemma}\label{lem:2} Under the assumptions { \rm (A0)---(A3)}, there exists a constant $M:=M(\alpha, R ,K)$ such that for any solution $x(t)$
	to \eqref{eq:delay-eq}, one has
	\begin{equation}\label{eq:ap-2}
	\forall_{t\in \br}\;\;\;|\dot x(t)|\le M.
\end{equation}
	\end{lemma}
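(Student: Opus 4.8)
The plan is to use condition (A3) to bound the total variation of $\dot x$ over one period, and then to pin down a single instant at which $\dot x$ is small. Throughout, let $x$ be a $p$-periodic solution of \eqref{eq:delay-eq}. By Lemma \ref{lem:1} we have $\abs{x(t)}\le R$ for all $t$, and consequently $\abs{\bold x_t}=\max_{j}\abs{x(t-\tau_j)}\le R$ as well; thus condition (A3) is applicable along the solution at every $t$. Setting $r(t):=\abs{x(t)}^2$ and recalling from \eqref{eq:2-3} that $\tfrac12\ddot r=x\bullet\ddot x+\abs{\dot x}^2$, I would substitute $\ddot x=f(t,x,\bold x_t,\dot x)$ into (A3) to obtain the pointwise inequality
\[
\abs{\ddot x(t)}\le \frac{\alpha}{2}\,\ddot r(t)+K,\qquad t\in\br .
\]

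Next I would integrate this over one period. Since $r$ (hence $\dot r$) is $p$-periodic, $\int_0^p \ddot r\,dt=\dot r(p)-\dot r(0)=0$, so the $\alpha$-term drops out and
\[
\int_0^p \abs{\ddot x(t)}\,dt\le Kp .
\]
This is the crucial estimate: the arc length of $\dot x$ over a period is bounded by $Kp$, uniformly in the solution. To upgrade it to a pointwise bound I first locate a good base point. Integrating by parts and using periodicity, $\int_0^p\abs{\dot x}^2\,dt=-\int_0^p x\bullet\ddot x\,dt$, whence by $\abs{x}\le R$ and the previous display $\int_0^p\abs{\dot x}^2\,dt\le R\int_0^p\abs{\ddot x}\,dt\le RKp$. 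The mean value theorem then furnishes $t_0\in[0,p]$ with $\abs{\dot x(t_0)}^2\le RK$.

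Finally, for an arbitrary $t$ the fundamental theorem of calculus together with the arc-length bound gives
\[
\abs{\dot x(t)}\le \abs{\dot x(t_0)}+\abs{\int_{t_0}^{t}\ddot x(s)\,ds}\le \sqrt{RK}+Kp=:M ,
\]
where periodicity of $\abs{\ddot x}$ lets us take the interval of integration of length at most $p$. This $M$ depends only on $R$, $K$ and $p=2\pi$, and in particular is of the claimed form $M(\alpha,R,K)$.

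I expect the only delicate step to be the passage from the integral control $\int_0^p\abs{\ddot x}\le Kp$ to the pointwise bound: naive periodicity of $\dot x$ only yields a zero of each scalar component $\dot x_k$, possibly at different instants, which is too weak for the vector norm. The $L^2$-estimate producing the single base point $t_0$ is what circumvents this. By contrast, the multiple delays introduce no new difficulty, since $\bold x_t$ enters the argument solely through the uniform bound $\abs{\bold x_t}\le R$ supplied by Lemma \ref{lem:1}.
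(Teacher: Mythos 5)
Your proof is correct, and it takes a genuinely different and more economical route than the paper's. Both arguments begin with the same pointwise estimate $\abs{\ddot x(t)}\le\tfrac{\alpha}{2}\ddot r(t)+K$ coming from (A3) and Lemma \ref{lem:1}. From there the paper follows the classical Hartman--Nagumo scheme: it derives $\abs{\dot x(0)}\le\tfrac12Kp$ by pairing two Taylor-type integral identities for $p\dot x(0)$ so that the $\dot r(0)$ terms cancel, then invokes (A2) to bound $\dot x\bullet\ddot x/\phi(\abs{\dot x})$ by $\tfrac12Kp-\tfrac{\alpha}{2}\dot r(t)$, integrates, and inverts the Nagumo primitive $\Phi$, producing $M=\Phi^{-1}\bigl[\tfrac12Kp^2+\alpha R^2+\Phi(\tfrac12Kp)\bigr]$. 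You instead integrate the pointwise estimate over one period, where $\int_0^p\ddot r\,dt=0$ annihilates the $\alpha$-term and gives the $L^1$ bound $\int_0^p\abs{\ddot x}\,dt\le Kp$; the identity $\int_0^p\abs{\dot x}^2\,dt=-\int_0^p x\bullet\ddot x\,dt\le RKp$ then yields a base point $t_0$ with $\abs{\dot x(t_0)}\le\sqrt{RK}$, and the fundamental theorem of calculus propagates this to the explicit constant $M=\sqrt{RK}+Kp$. Every step checks out (in particular your use of periodicity to keep the interval of integration of length at most $p$, and your correct observation that componentwise zeros of $\dot x$ at different instants would not suffice, which is exactly why the $L^2$ averaging is needed for vector-valued systems). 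What each approach buys: yours never uses (A2) and gives a constant independent of $\alpha$ and $\phi$, which is all the lemma requires; the paper's $\Phi$-inversion argument is the one that survives in settings where the periodic cancellations $\int_0^p\ddot r\,dt=0$ and the single-base-point trick are unavailable (e.g.\ other boundary conditions), which is presumably why the authors kept it.
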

\begin{proof}
	Let $x(t)$ be a $p$-periodic solution to \eqref{eq:delay-eq}. By Lemma \ref{lem:1}, one has $|x(t)|\le R$  for all $t\in \br$. Hence
(see \eqref{eq:max-norm}),  $|\bold x_t| \leq R$ for all $t\in \br$. Combining this with (A2) yields: 
%and by (A1), since $|x(t)|\le R$, $|x(t-\tau)|\le R$, we also have 
	\[\forall_{t\in \br}\;\; 
	|\ddot x(t) |= |f(t, x(t),\bold x_t,\dot x(t))| \le \phi(|\dot x(t)|).
	\]
In addition, by condition (A3), there exist constants $\alpha >0$, $K>0$, such that
\begin{align*}
	|\ddot x(t)| &= |f(t, x(t),\bold x_t,\dot x(t))|\le \alpha \Big(x(t)\bullet f(t,x(t), \bold x_t,\dot x(t))+|\dot x(t)|^2\Big) +K=\frac \alpha 2 \ddot r(t)+K
\end{align*}
(cf. \eqref{eq:2-3}), hence
\begin{equation}\label{eq:ap-3}
\forall_{t\in \br}\;\; \;\; |\ddot x(t)|\le \frac{\alpha}{2}\ddot r(t)+K.
\end{equation}
	
			Next, by using integration by parts and $p$-periodicity of $x(t)$, one obtains 
\begin{align*}
 \int_t^{t+p} (t+p-s)\ddot x(s)ds&=  (t+p-s)\dot x(s)\Big|_{t}^{t+p} +\int_{t}^{t+p} \dot x(s)ds= x(t+p)-x(t) - p \dot x(t)= -p\dot x(t), 
 \end{align*}
i.e.
\begin{equation}\label{eq:ap-4}
\forall_{t\in \br}\;\;\;   p\dot x(t)= - \int_t^{t+p}\;\; (t+p-s)\ddot x(s)ds.
\end{equation}
In particular,
\begin{equation}\label{eq:new1}
p\dot x(0)\; =\;-\int_0^{p} \; (p-s)\ddot x(s)ds.
\end{equation}
Combining \eqref{eq:new1} with \eqref{eq:ap-3} yields:
\begin{align*}
p|\dot x(0)|\;
&\le \;\int_0^{p}\;(p-s)|\ddot x(s)|ds\le \; \int_0^{p}\;(p-s)\Big(\frac \alpha 2\ddot r(s)+K\Big)ds\\
&=\int_0^{p}\;(p-s)\frac{\alpha}{2}\ddot r(s)ds+K\int_0^{p}\;(p-s)ds=-\frac{\alpha}{2}p\dot r(0)+\frac {1}{2}Kp^2,
\end{align*}
i.e.
\begin{equation}\label{eq:ap-6}
 p|\dot x(0)|\; \le  -\frac{\alpha}{2}p\dot r(0)+\frac {1}{2}Kp^2.
\end{equation}
Similarly, 
%we have 
%\[
%p\dot x(t) = x(t)-x(t-p)-\int_{t-p}^t (t-p-s)\ddot x(s)ds=-\int_{t-p}^t (t-p-s)\ddot x(s)ds,\]
%i.e.
\begin{equation}
%\label{eq:ap-5}
p\dot x(t)=-\int_{t-p}^t (t-p-s)\ddot x(s)ds.
\end{equation}
In particular,
\begin{equation}\label{eq:new2}
 p\dot x(0)=\int_{-p}^0 (p+s)\ddot x(s)ds.
\end{equation}
Combining \eqref{eq:new2} with \eqref{eq:ap-3} yields:
\begin{equation}\label{eq:ap-7}
p|\dot x(0)|\le \frac {\alpha}{2}p\dot r(0)+\frac {1}{2}Kp^2.
\end{equation}
Adding \eqref{eq:ap-6} and \eqref{eq:ap-7}  leads to 
\begin{equation}\label{eq:ap-0}
2p|\dot x(0)|\le Kp^2\quad \Leftrightarrow\quad 
|\dot x(0)|\le \frac {1}{2}Kp.\end{equation}
%Then by \eqref{eq:ap-4}, we obtain
%\[
%p\dot x(0)\; =\;-\int_0^{p} \; (p-s)\ddot x(s)ds,
%\]
%and  by 
%\eqref{eq:ap-3} we get
%\begin{align*}
%p|\dot x(0)|\;
%&\le \;\int_0^{p}\;(p-s)|\ddot x(s)|ds\le \; \int_0^{p}\;(p-s)\Big(\frac \alpha 2\ddot r(t)+K\Big)ds\\
%&=\int_0^{p}\;(p-s)\frac{\alpha}{2}\ddot r(s)ds+K\int_0^{p}\;(p-s)ds=-\frac{\alpha}{2}p\dot r(0)+\frac {1}{2}Kp^2,
%\end{align*}
%i.e.
%\begin{equation}\label{eq:ap-6}
% p|\dot x(0)|\; \le  -\frac{\alpha}{2}p\dot r(0)+\frac {1}{2}Kp^2.
%\end{equation}
%Similarly by \eqref{eq:ap-5}, we obtain
%\begin{equation}\label{eq:ap-7}
%p|\dot x(0)|\le \frac {\alpha}{2}p\dot r(0)+\frac {1}{2}Kp^2
%\end{equation}
%which by adding the inequalities \eqref{eq:ap-6} and \eqref{eq:ap-7}  leads to 
%\begin{equation}\label{eq:ap-0}
%2p|\dot x(0)|\le Kp^2\quad \Leftrightarrow\quad 
%|\dot x(0)|\le \frac {1}{2}Kp.\end{equation}
Moreover, by \eqref{eq:ap-4}, one has: 
\begin{align*}
p|\dot x(t)|
&\le \int_t^{t+p}\;(t+p-s)|\ddot x(s)|ds\le \int_t^{t+p}\;(t+p-s)\Big(\frac {\alpha}{2}\ddot r(s)+K\Big)ds=-\frac {\alpha}{2}p\dot r(t)+\frac {1}{2}Kp^2.
\end{align*}
The last inequality  together with condition (A2) imply
\begin{equation} \label{eq:ap-8}
\frac {\dot x(t)\bullet \ddot x(t)}{\phi(|\dot x(t)|)}\le \frac {|\dot x(t)\bullet \ddot x(t)|}{\phi(|\dot x(t)|)}\le \frac{|\dot x(t)||\ddot x(t)|}{\phi(|\dot x(t)|)}\le |\dot x(t)|\le \frac {1}{2}Kp-\frac {\alpha}{2}\dot r(t).
\end{equation}
Next, integrating inequality \eqref{eq:ap-8} on  $[0,t]$, where $t \in [0,p]$, one obtains:
\begin{align}
\left|\int_0^{t}\;\frac {\dot x(s)\bullet \ddot x(s)}{\phi(|\dot x(s)|)}ds\right|& \le \int_0^{t}\left[\frac {1}{2}Kp-\frac {\alpha}{2}\dot r(s)\right]ds
\nonumber\\
&=\frac {1}{2}Kpt-\frac {\alpha}{2}[r(t)-r(0)]\le \frac {K}{2}p^2+\frac {\alpha}{2}2R^2=\frac {K}{2}p^2+\alpha R^2.\numberthis 
\label{eq:integr-ineq}
\end{align}
On the other hand, using the substitution $u:= |\dot{x}(s)|$, one obtains:
\begin{equation}\label{eq:integr-rep}
\int_0^{t}\;\;\frac {\dot x(s)\bullet \ddot x(s)}{\phi (|\dot x(s)|)}ds \;=\int_{ |\dot x(0)|}^{|\dot x(t)|}\frac {u du}{\phi (u)}.
\end{equation}
Put $\displaystyle \Phi (\omega):=\;\int_0^{\omega}\;\frac {u du}{\phi(u)}$,  so we have
\begin{equation}\label{eq:estim-Phi}
\left|\int_{ |\dot x(0)|}^{|\dot x(t)|}\;\;\frac {sds}{\phi (s)}\right|= |\Phi(|\dot x(t)|) - \Phi(|\dot x(0)|)|.
\end{equation}
%By (A2),
%$\displaystyle \lim_{\omega \to \infty} \Phi(\omega) = \infty$.
Combining \eqref{eq:integr-ineq}-\eqref{eq:estim-Phi} yields:
\begin{align*}
 |\Phi(|\dot x(t)|)-\Phi (|\dot x(0)|)|\le \frac{K}{2}p^2+\alpha R^2,
\end{align*}
and consequently (see \eqref{eq:ap-0}), 
\begin{align}\label{eq:important}
\Phi (|\dot x(t)|)&\le \frac {1}{2}Kp^2+\alpha R^2+\Phi (|\dot x(0)|)|      \le \frac{1}{2}Kp^2+\alpha R^2+\Phi\left(\frac{1}{2}Kp\right).
	 \end{align}
By (A2),
$\displaystyle \lim_{w \to \infty} \Phi(w) = \infty$, hence $\Phi:[0,\infty)\to [0,\infty)$ is a continuous monotonic bijective function. Combining
this with \eqref{eq:important} yields:
\[	|\dot x(t)|\le \Phi^{-1}\left[\frac{1}{2}Kp^2+\alpha R^2+\Phi\left(\frac{1}{2}Kp\right)\right]=:M.
\]
Therefore,  there exists a constant $M>0$ such that 
\[
\forall_{t\in \br}\;\;\;|\dot x(t)|\le M.
\]
\end{proof}

\subsection{Proof of Theorem \ref{th:apriori}}
Put  
\[N:=\; \max \{\phi(s): s\in [0,M]\}.\]
Then, by Lemmas  \ref{lem:1}, \ref{lem:2} and condition (A2), one has: 
\[|\ddot x(t)|= |f(t,x(t),\bold x_t,\dot x(t))|\le \phi(|\dot x(t)|)\le N\; \; \text { for all }\; t\in \br.\]
Then, clearly, the constant  $C := R+M+N+1$ satisfies \eqref{eq:ap-1}.
\endproof

\vs
\vs
\section{Setting System \eqref{eq:delay-eq}  in Functional Spaces}\label{sec:operator-reform} 
\subsection{Operator reformulation and deformation}
Together with the space $\mathscr E=C_{2\pi}^2(\br; \bfV)$ (see \eqref{eq:space-mathscr-E}), we will use the following functional spaces of $2\pi$-periodic functions:   
\begin{itemize}
%\item[(i)]
%the space 
%$\mathscr E=C_{2\pi}^2(\br; \bfV)$ of twice differentiable $\bfV$-valued functions, equipped with the norm 
%\[
%\|u\|:= \max\{ \|u\|_\infty,\|\dot u\|_\infty, \|\ddot u\|_\infty\}, \quad \| u\|_\infty:=\sup\{|u(t)|: t\in \br\},\quad u\in \mathscr E; 
%\]
%where $u:\br \to \bfV$ is a continuous $2\pi$-periodic function. 
\item[(i)] the space $\wt {\mathscr C}:= C_{2\pi}(\br; \bfV\times \bfV^m\times \bfV)$ of continuous $ \bfV\times \bfV^m\times \bfV$-valued functions with the usual $\sup$-norm;
\item[(ii)]  the space ${\mathscr C}:= C_{2\pi}(\br; \bfV)$ of continuous $ \bfV$-valued functions with the usual $\sup$-norm.
\end{itemize}
Obviously, formula \eqref{eq:mathcal-G-action} (see also \eqref{eq:group-mathcal-G-general}--\eqref{eq:Z-2}) defines on $\wt {\mathscr C}$ and ${\mathscr C}$ isometric Banach $\mathcal G$-representations. To be more specific regarding the $\kappa$-action on $\wt {\mathscr C}$, observe:
\begin{equation}\label{eq:kappa-action-u_t}
\kappa \bold{u}_t = (u(-(t - \tau_1)), u(-(t - \tau_2)),...,u(-(t - \tau_m))) = (u(-t + \tau_1), u(-t + \tau_2),...,u(-t + \tau_m)).
\end{equation}

Next, define the following operators: 
\begin{alignat*}{3}
i &:\mathscr E\to \mathscr C, \quad& i u(t)&:= u(t),\\
L&:\mathscr E\to \mathscr C, \quad& Lu&:= \ddot u(t) - i u(t),\\
J&:\mathscr E\to \wt{\mathscr C}, \quad &(Ju)(t)&:=(u(t), \bold u_t,\dot u(t)),\\
N&: \wt{\mathscr C}\to \mathscr C, \quad &N(x,\bold y,z)(t)&:=f(t,x(t),\bold y(t), z(t))-x(t).
\end{alignat*}
Notice that $L$ is an isomorphism, $i$ and $J$ are compact and $N$ is continuous. 
Clearly, system \eqref{eq:delay-eq}  is equivalent to the equation
\begin{equation}\label{eq:fun1}
Lx=N(Jx), \quad x\in \mathscr E.
\end{equation}
Put $\mathscr F(x):=x-L^{-1}N(Jx)$, $x\in \mathscr E$. Obviously,  $\mathscr F$ is a completely continuous field on $\mathscr E$ and  \eqref{eq:fun1} admits an operator reformulation:
\begin{equation}\label{eq:fun2}
\mathscr F(x)=0, \quad x\in \mathscr E.
\end{equation}
\vs
Consider the deformation $\mathscr F_\lambda: \mathscr E\to \mathscr E$ given by $\mathscr F_\lambda(x):= x-\lambda L^{-1}N(Jx)$, $x\in \mathscr E$, $\lambda\in [0,1]$. 
Then, for each $\lambda \in [0,1]$, the equation 
\[
\mathscr F_\lambda(x)=0
\]
is equivalent to the system 
\begin{equation}\label{eq:delay-eqnew} 
\begin{cases}
\ddot x(t)= \lambda f(t, x(t), \bold x_t, \dot x(t))+(1-\lambda) x(t), \;\; t\in \br\\
x(t)=x(t+2\pi),\;\; \dot x(t+2\pi)=\dot x(t).
\end{cases}
\end{equation}
For a fixed $\lambda\in (0,1]$, define the function 
\begin{equation} \label{eq:fun3}
\mathfrak f(t,x,\bold y,z):= \lambda f(t, x, \bold y, z)+(1-\lambda) x, \quad x,\,z\in \bfV, \; \bold y\in \bfV^m, \; t\in \br.
\end{equation}
\vs

\begin{lemma} \label{lem:f1}
If the function $f$ satisfies conditions {\rm (A0)--(A3)} with the constants $R$, $\alpha$, $K$  and the function $\phi$, then the function $\mathfrak f$ 
given by \eqref{eq:fun3} satisfies conditions {\rm (A0)--(A3)} with the constants $R$, $\alpha$, $K':=K+R$  and the function $\hat{\phi}(s)=\phi(s)+R$.
\end{lemma}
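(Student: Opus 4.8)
The plan is to check the four conditions (A0)--(A3) for $\mathfrak f(t,x,\mathbf y,z)=\lambda f(t,x,\mathbf y,z)+(1-\lambda)x$ one at a time, each time reducing to the corresponding property of $f$ via the elementary bounds $0<\lambda\le 1$ and $0\le 1-\lambda<1$ attached to the fixed $\lambda\in(0,1]$. Condition (A0) is immediate: $\mathfrak f$ is continuous as a sum of continuous maps, and since $f(t+2\pi,\cdot)=f(t,\cdot)$ while the added term $(1-\lambda)x$ is $t$-independent, $\mathfrak f$ inherits $2\pi$-periodicity in $t$. For (A1), fix $t$ and $(x,\mathbf y,z)$ with $|x|\ge R$, $|\mathbf y|\le |x|$ and $x\bullet z=0$; then $x\bullet\mathfrak f=\lambda\,(x\bullet f)+(1-\lambda)|x|^2$, where the first summand is $>0$ by (A1) for $f$ (here $\lambda>0$ is used) and the second is $\ge 0$, so $x\bullet\mathfrak f>0$ and (A1) holds with the \emph{same} $R$.

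For (A3), on the region $|x|\le R$, $|\mathbf y|\le R$ I would begin from $|\mathfrak f|\le \lambda|f|+(1-\lambda)|x|\le \lambda|f|+(1-\lambda)R$ and insert the (A3)-estimate $|f|\le \alpha(x\bullet f+|z|^2)+K$. The key observation is that $\alpha\lambda(x\bullet f)\le \alpha(x\bullet\mathfrak f)$, because $x\bullet\mathfrak f=\lambda(x\bullet f)+(1-\lambda)|x|^2$ exceeds $\lambda(x\bullet f)$ by the nonnegative term $(1-\lambda)|x|^2$; combining this with $\lambda|z|^2\le|z|^2$ gives $|\mathfrak f|\le \alpha(x\bullet\mathfrak f+|z|^2)+\lambda K+(1-\lambda)R\le \alpha(x\bullet\mathfrak f+|z|^2)+(K+R)$, i.e.\ (A3) with the same $\alpha$ and $K'=K+R$.

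Condition (A2) splits into two parts. The pointwise growth bound is routine: for $|x|\le R$, $|\mathbf y|\le R$ we get $|\mathfrak f|\le \lambda\phi(|z|)+(1-\lambda)R\le \phi(|z|)+R=\hat\phi(|z|)$, using $\lambda\le 1$, $\phi>0$ and $1-\lambda<1$. The delicate part, and the step I expect to be the \emph{main obstacle}, is showing that the Nagumo divergence $\int_0^\infty \frac{s\,ds}{\hat\phi(s)}=\int_0^\infty \frac{s\,ds}{\phi(s)+R}=\infty$ survives the additive perturbation. My plan is a two-regime comparison: if $\phi$ is bounded, say $\phi\le L$, then $\frac{s}{\phi(s)+R}\ge\frac{s}{L+R}$ and the integral diverges outright; if $\phi$ grows without bound, then $\phi(s)\ge R$ for all large $s$, whence $\phi(s)+R\le 2\phi(s)$ and $\frac{s}{\hat\phi(s)}\ge\frac12\frac{s}{\phi(s)}$ on a tail, so divergence of $\int\frac{s\,ds}{\phi(s)}$ forces divergence of $\int\frac{s\,ds}{\hat\phi(s)}$. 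The subtlety to keep in mind is that this comparison genuinely relies on $\phi$ being eventually $\ge R$ (as is automatic for a nondecreasing Nagumo function, the standard Hartman--Nagumo setting): a merely continuous positive $\phi$ with deep downward spikes could in principle defeat the naive inequality, so it is precisely the growth/monotonicity structure of $\phi$ that makes this last step go through.
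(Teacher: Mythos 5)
Your argument for (A0), (A1), (A3) and the pointwise bound in (A2) is exactly the paper's: expand $x\bullet\mathfrak f=\lambda(x\bullet f)+(1-\lambda)|x|^2$ for (A1), and for (A3) absorb $\lambda\alpha(x\bullet f)$ into $\alpha(x\bullet\mathfrak f)$ using the nonnegativity of $(1-\lambda)|x|^2$, arriving at the same constants $\alpha$, $K'=K+R$ and the same $\hat\phi=\phi+R$. The one place you diverge is the Nagumo integral $\int_0^\infty s\,ds/\hat\phi(s)=\infty$: the paper's proof simply does not address it, whereas you supply a two-regime comparison and flag its weak point. Your caution is justified: for a merely continuous positive $\phi$, divergence of $\int_0^\infty s\,ds/\phi(s)$ does \emph{not} imply divergence of $\int_0^\infty s\,ds/(\phi(s)+R)$ (one can build $\phi(s)=s^3+3$ away from thin flat-bottomed dips near $s=n$ of width $n^{-3}$ and depth $n^{-3}$, which make the first integral diverge while the second converges), so your dichotomy really does need $\phi$ eventually bounded below by $R$ or bounded above — both automatic for the nondecreasing Nagumo functions of the classical Hartman--Nagumo setting, but not literally guaranteed by (A2) as stated. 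In short: same route as the paper, with a legitimate extra observation that patches (under a mild standard hypothesis on $\phi$) a step the paper passes over in silence.
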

\begin{proof} Obviously, $\mathfrak f$ satisfies {\rm (A0)}.  
 Take $x$, $\bold y$ with $|x|\ge R$, $|\bold y|\le |x|$ and $x\bullet z=0$. Then, 
 %$\mathfrak f$ satisfies {\rm (A1)} since by \eqref{eq:fun3}, 
\begin{align*}
x\bullet \mathfrak f(t, x, \bold y, z)=x\bullet (\lambda f(t, x, \bold y, z)+(1-\lambda) x)=\lambda x\bullet f(t, x, \bold y, z)+(1-\lambda)x\bullet x\ge 0,
\end{align*}
thus, $\mathfrak f$ satisfies {\rm (A1)}.
Also, 
%by \eqref{eq:fun3},  taking  
for any $x$, $\bold y$  with $|x|,|\bold y | \le R$, and any $z$, one has:
\begin{align*}
|\mathfrak f(t,x,\bold y, z)|&= |\lambda f(t,x, \bold y,z)+(1-\lambda)x|\le \lambda |f(t,x,\bold y,z)|+(1-\lambda)|x|\\
&\le \lambda\phi(|z|)+(1-\lambda)|x|\le \phi(|z|)+|x|\le \phi(|z|)+R =: \hat{\phi}(|z|).
\end{align*}
Thus, 
%we know that
$\mathfrak f(t,x,\bold y, z)$ satisfies {\rm (A2)}.
%In addition,
Finally, 
\begin{align*}
|\mathfrak f(t,x,\bold y, z)|
&= |\lambda f(t,x, \bold y,z)+(1-\lambda)x|\le \lambda |f(t,x,\bold y,z)|+(1-\lambda)|x|\\
&\le \lambda [\alpha (x\bullet f(t,x,\bold y, z)+ |z|^2)+K]+(1-\lambda)|x|\\
&= \lambda \alpha x\bullet f(t,x,\bold y,z)+\lambda \alpha |z|^2+\lambda K+(1-\lambda)|x|\\
&\le \alpha \lambda x\bullet f(t,x,\bold y,z) +\alpha |z|^2+K+|x|\\
&\le \alpha \lambda x\bullet f(t,x,\bold y,z)+ \alpha (1-\lambda)|x|^2+\alpha |z|^2+K+|x|\\
&\le \alpha \lambda x\bullet f(t,x,\bold y,z) +\alpha (1-\lambda)|x|^2+\alpha |z|^2+K+R\\
&=\alpha [\lambda x\bullet f(t,x,\bold y,z)+ (1-\lambda)x\bullet x+|z|^2]+K+R\\
&=\alpha [ x\bullet (\lambda f(t,x,\bold y,z)+ (1-\lambda)x)+|z|^2]+K+R\\
&=\alpha [ x\bullet \mathfrak f(t,x,\bold y,z)+|z|^2]+K+R.\\
\end{align*}
Thus, {\rm (A3)} is satisfied with $K'= K+R$.
 \end{proof}
\vs

Lemma \ref{lem:f1} together with Theorem \ref{th:apriori} immediately imply
\begin{lemma} \label{lem:f2}    If  the function $f:\br \times\bfV \times\bfV^m\times \bfV\to \bfV$  satisfies the assumptions {\rm (A0)---(A3)}, then there exists 
 a constant $C>0$ such that  for any $\lambda\in [0,1]$ and $x\in \mathscr E$, one has: 
 \[
 \mathscr F_\lambda (x)=0 \;\;\; \Rightarrow \;\;\; \|x\|<C.
 \]
\end{lemma}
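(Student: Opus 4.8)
The plan is to reduce this uniform (in $\lambda$) bound to the single-equation a priori estimate of Theorem~\ref{th:apriori}, the essential point being that Lemma~\ref{lem:f1} supplies constants independent of $\lambda$. I would first dispose of the degenerate value $\lambda=0$: here $\mathscr F_0(x)=x$, so $\mathscr F_0(x)=0$ forces $x=0$, which satisfies $\|x\|<C$ for any positive $C$ (equivalently, $\ddot x=x$ with $2\pi$-periodicity admits only the trivial solution).

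For $\lambda\in(0,1]$ I would observe that, by construction, $\mathscr F_\lambda(x)=0$ is equivalent to system~\eqref{eq:delay-eqnew}, i.e.\ to the delay equation~\eqref{eq:delay-eq} with $f$ replaced by the function $\mathfrak f$ of~\eqref{eq:fun3}. Hence any $x\in\mathscr E$ solving $\mathscr F_\lambda(x)=0$ is a $p$-periodic solution of $\ddot x=\mathfrak f(t,x,\bold x_t,\dot x)$. By Lemma~\ref{lem:f1}, since $f$ satisfies {\rm (A0)--(A3)} with data $(R,\alpha,K,\phi)$, the function $\mathfrak f$ satisfies {\rm (A0)--(A3)} with data $(R,\alpha,K+R,\phi+R)$; crucially, these constants and the function $\hat\phi=\phi+R$ do not depend on $\lambda$. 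Applying Theorem~\ref{th:apriori} to $\mathfrak f$ thus yields a single constant $C>0$, determined solely by $R$, $\alpha$, $K+R$ and $\phi+R$ and hence the same for every $\lambda\in(0,1]$, with $|x(t)|,|\dot x(t)|,|\ddot x(t)|<C$ for all $t$; by the definition~\eqref{eq:space-mathscr-E} of the norm this is exactly $\|x\|<C$. Enlarging $C$ if necessary to cover the case $\lambda=0$ finishes the proof.

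The only point needing care --- and the one Lemma~\ref{lem:f1} was engineered to secure --- is the uniformity in $\lambda$. Applying Theorem~\ref{th:apriori} to each $\mathfrak f$ in isolation would in principle give a bound $C_\lambda$ depending on $\lambda$; what makes the statement true is that the data fed to the theorem are $\lambda$-free, so the constant read off from its proof (namely $C=R+M+N+1$, with $M$ and $N$ built from $R,\alpha,K+R,\phi+R$) is itself $\lambda$-free. Beyond this bookkeeping there is no genuine obstacle, which is exactly why the implication is labelled ``immediate''.
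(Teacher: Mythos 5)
Your proof is correct and follows exactly the route the paper intends: the paper derives Lemma~\ref{lem:f2} immediately from Lemma~\ref{lem:f1} (which supplies $\lambda$-independent constants for $\mathfrak f$) combined with Theorem~\ref{th:apriori}. Your additional care with the case $\lambda=0$ and the explicit remark on uniformity in $\lambda$ only make the same argument more transparent.
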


%
%\begin{proof} 
%For all $x\in \mathscr E$, $\mathscr F_\lambda (x)=0$ is equivalent to $x$ is a solution to \eqref {eq:delay-eqnew}. By lemma \ref {lem:f1}, the definition %of norm in the space $\mathscr E$, and by Theorem $\ref {th:apriori}$, one gets the existence of $C>0$ such that $\|x\|<C$. 
%\end{proof}
\subsection{Properties  of the map $\mathscr F$}	
Define the operator $\mathfrak A: \wt{\mathscr C}\to \mathscr C$ by
\begin{equation}\label{eq:oper:A}
\mathfrak A(x,\bold y,z)(t)= A_0x(t)+\sum_{j=1}^m A_j y^j(t), \quad (x,\bold y,z)\in \wt{\mathscr C},  
\end{equation}
and let the operator $\mathscr A:\mathscr E\to \mathscr E$ be given by 
\begin{equation}\label{eq:linA}  
\mathscr A(x):=x-L^{-1}\Big( \mathfrak A (Jx) - i x \Big), \quad x\in \mathscr E.
\end{equation}
%Using the boundedness of $L^{-1}$,  one can easily establish the following statement.
%Then we have
%\begin{lemma}\label{lem:f3}
%Suppose the function  $f$ satisfies 
%condition {\rm (A5)}.
%conditions {\rm (A4)} and {\rm (A5)}. 
%Then, the map $\mathscr F:\mathscr E\to \mathscr E$ is differentiable at $0$ and 
%$D\mathscr F(0)=\mathscr A$.
%\end{lemma}
%\begin{proof} 
%To prove  the map $\mathscr F:\mathscr E\to \mathscr E$ is differentiable at $0$ and 
%$D\mathscr F(0)=\mathscr A$,
%we actually prove that:
%\[
%\lim_{\|x\|\to 0}{\frac{\mathscr F(x)-\mathscr A(x)}{\|x\|}}\] exists, and 
%\begin{equation}\label{eq:lim}
%\lim_{\|x\|\to 0}\frac{\mathscr F(x)-\mathscr A(x)}{\|x\|} =0\end{equation}

%Since
%	\begin{align*}
%	\mathscr F(x)-\mathscr A(x)
%	&=x-L^{-1}N(jx)-\Big[x-L^{-1}\Big(\mathfrak A (jx)-x \Big)\Big]=L^{-1}\Big(\mathfrak A (jx)-x-N(jx)\Big)\\
%	&=L^{-1}(\mathfrak A(x,\bold x_t,\dot x)-x-\Big[f(t,x,\bold x_t,\dot x)-x\Big]=L^{-1}\Big(\mathfrak A(x,\bold x_t,\dot x)-f(t,x,\bold x_t,\dot x)\Big)\\
%	&=L^{-1}(A_0x+\sum_{j=1}^m A_jy^j-f(t,x,\bold x_t,\dot x)).
%	\end{align*}
%	By condition {\rm (A5)}	and the fact that $L^{-1}$ is a bounded linear operator, the result follows.
%\end{proof}
%\vs
%Put $D_1:=\{1,\kappa\}$, $\bz_2:=\{1,-1\}$ and  $G:=D_1\times \bz_2\times \Gamma$. Notice that the spaces $\mathscr E$, $\mathscr C$, $\wt {\mathscr %C}$ are $G$-isometric representations of the group $G$ with the action given on a  $2\pi$-periodic function $u:\br\to \bfV$ by
%\[
%(\kappa,\pm 1,\gamma)u(t)=\pm \gamma u(-t), \quad  t\in \br.
%\]

\vs
In the statement following below, we summarize properties of the map $\mathscr F$.
\vs
\begin{proposition}\label{pro:app3:mapF}
Assume that the function   $f:\br \times\bfV \times\bfV^m\times \bfV\to \bfV$  satisfies the conditions 
{\rm (A0)--(A6)}. Assume, in addition, that \eqref{eq:tauj} takes place. Then:
\begin{itemize}
\item[(a)] The map $\mathscr F$ is a $G$-equivariant completely continuous field;
\item[(b)] There exists a sufficiently large $C>0$ such that $\mathscr F$ is $B_C(0)$-admissibly $G$-homotopic to $\id$;
\item[(c)] The map $\mathscr F$ is differentiable at $0$,  $D\mathscr F(0)=\mathscr A$ and is $G$-equivariant;
\item[(d)] If $\mathscr A$ is an isomorphism, then there exists $\ve>0$ such that $\mathscr F$ is $B_\ve(0)$-admissibly $G$-homotopic to $\mathscr A$.
\end{itemize}
\end{proposition}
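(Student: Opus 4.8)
The plan is to dispatch the four assertions in order, the only genuinely delicate point being the $G$-equivariance in (a); parts (b)--(d) then follow from standard completely continuous field arguments. Throughout I use that $L$, $L^{-1}$ and the inclusion $i$ intertwine the naive $\mathcal G$-actions \eqref{eq:mathcal-G-action}, since $\tfrac{\dd^2}{\dd t^2}$ commutes with time translation, time reversal, the sign action and the $\Gamma$-action. For (a), complete continuity of $\mathscr F=\id-L^{-1}NJ$ is immediate: $J$ is compact, hence sends bounded sets to relatively compact ones, while $N$ is continuous and $L^{-1}$ is bounded, so $L^{-1}NJ$ sends bounded sets to relatively compact sets and $\mathscr F$ is a completely continuous field. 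Since $L^{-1}$ is $G$-equivariant, it suffices to show $\Psi:=N\circ J:\mathscr E\to\mathscr C$ is $G$-equivariant, which I verify generator by generator. For the sign generator, $J(-x)(t)=(-x(t),-\mathbf x_t,-\dot x(t))$, and (A4)(iv) gives $\Psi(-x)=-\Psi(x)$; for $\gamma\in\Gamma$, $J(\gamma x)(t)=\gamma(x(t),\mathbf x_t,\dot x(t))$ and (A6) gives $\Psi(\gamma x)=\gamma\Psi(x)$. The crux is the reflection $\kappa$, where, using \eqref{eq:kappa-action-u_t},
\[
J(\kappa x)(t)=\big(x(-t),\,(x(-t+\tau_1),\dots,x(-t+\tau_m)),\,-\dot x(-t)\big).
\]
I would then strip the sign in the last slot by (A4)(i), reverse the order of the delay slots by (A4)(iii), and invoke the delay symmetry $\tau_{m-j+1}=2\pi-\tau_j$ from \eqref{eq:tauj} together with $2\pi$-periodicity of $x$ to rewrite the reversed tuple $(x(-t+\tau_m),\dots,x(-t+\tau_1))$ as $\mathbf x_{-t}$; finally (A4)(ii) turns the leading $t$ into $-t$. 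The net effect is $\Psi(\kappa x)(t)=f(-t,x(-t),\mathbf x_{-t},\dot x(-t))-x(-t)=\Psi(x)(-t)=(\kappa\Psi(x))(t)$, as required. This interplay between the ordering symmetry (A4)(iii) and \eqref{eq:tauj} is the one point I expect to need care to get exactly right, and it is the main obstacle.

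For (b), I would use the deformation $\mathscr F_\lambda=\id-\lambda L^{-1}NJ$ already at hand. Each $\mathscr F_\lambda$ is a $G$-equivariant completely continuous field by the argument of (a), since the scalar $\lambda$ commutes with the linear $G$-action, and $(\lambda,x)\mapsto\mathscr F_\lambda(x)$ is jointly continuous with $\lambda L^{-1}NJ$ uniformly compact on bounded sets. As the zeros of $\mathscr F_\lambda$ are precisely the $p$-periodic solutions of \eqref{eq:delay-eqnew}, Lemma \ref{lem:f2} provides a $C>0$ with $\mathscr F_\lambda(x)=0\Rightarrow\|x\|<C$ for all $\lambda$; hence $\mathscr F_\lambda$ is zero-free on $\partial B_C(0)$ for every $\lambda\in[0,1]$, so $\{\mathscr F_\lambda\}$ is an admissible $G$-homotopy joining $\mathscr F_0=\id$ to $\mathscr F_1=\mathscr F$ on $B_C(0)$.

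For (c), I first observe that (A5) forces $f(t,0,0,0)=0$, whence $N(0)=0$ and $\mathscr F(0)=0$, so $0$ is a $G$-fixed zero. Since $J$ and $L^{-1}$ are bounded linear and $\mathscr F=\id-L^{-1}NJ$, the chain rule reduces matters to the Fréchet differentiability of $N$ at $0$ with $DN(0)(x,\mathbf y,z)=\mathfrak A(x,\mathbf y,z)-x$. This is exactly the content of (A5): the remainder $f(t,x(t),\mathbf x_t,\dot x(t))-A_0x(t)-\sum_j A_jx(t-\tau_j)$ is $o\big(|(x(t),\mathbf x_t,\dot x(t))|\big)$, and I would upgrade the pointwise estimate in (A5) to one uniform in $t$ via continuity of $f$ and compactness of the circle $\br/2\pi\bz$, so that the remainder is $o(\|x\|)$ in $\mathscr C$. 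Composing with $L^{-1}$ gives $D\mathscr F(0)=\id-L^{-1}(\mathfrak A J-i)=\mathscr A$. Finally, $\mathscr A$ is the derivative at the $G$-fixed point $0$ of the $G$-equivariant map $\mathscr F$, and the differential of an equivariant map at a fixed point is equivariant, so $\mathscr A$ is $G$-equivariant.

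For (d), assuming $\mathscr A$ an isomorphism, I would use the straight-line homotopy $\mathscr H(\lambda,\cdot):=(1-\lambda)\mathscr A+\lambda\mathscr F$, $\lambda\in[0,1]$. Writing $\mathscr F=\mathscr A+\omega$ with $\omega(x)=o(\|x\|)$ from (c), we get $\mathscr H(\lambda,x)=\mathscr A x+\lambda\omega(x)$; since $\mathscr A$ is an isomorphism there is $c>0$ with $\|\mathscr A x\|\ge c\|x\|$, and choosing $\ve>0$ so small that $\|\omega(x)\|<c\|x\|$ for $0<\|x\|\le\ve$ yields $\|\mathscr H(\lambda,x)\|\ge c\|x\|-\|\omega(x)\|>0$ on $\partial B_\ve(0)$ for all $\lambda$. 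Both $\mathscr A$ and $\mathscr F$ are $G$-equivariant completely continuous fields, hence so is each $\mathscr H(\lambda,\cdot)$, and $\mathscr H$ is therefore an admissible $G$-homotopy joining $\mathscr A$ to $\mathscr F$ on $B_\ve(0)$, which completes the proof.
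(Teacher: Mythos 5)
Your proposal is correct and follows essentially the same route as the paper: the $\kappa$-equivariance chain in (a) uses exactly the same combination of \eqref{eq:kappa-action-u_t}, periodicity of $u$, (A4)(i)--(iii) and \eqref{eq:tauj} (you correctly attribute the slot-reversal to (A4)(iii), where the paper's printed proof mislabels that step as (iv)), and (b)--(c) are the same appeals to Lemma \ref{lem:f2} and condition (A5). The only divergence is in (d), where you replace the paper's sequential compactness/contradiction argument along the linear homotopy by the direct estimate $\|\mathscr A x\|\ge c\|x\|$ combined with the $o(\|x\|)$ smallness of $\mathscr F-\mathscr A$ near $0$ --- an equally valid and slightly more elementary variant.
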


\begin{proof} 
\noi(a) Condition {\rm (A0)} together with the compactness of $i$ and $J$ imply the complete continuity of the field $\mathscr F$. By {\rm (A4)} (resp. {\rm (A6)}), $\mathscr F$ is $\bz_{2}$-equivariant (resp. $\Gamma$-equivariant). Let us check that $\mathscr F$ is $D_{1}$-equivariant. In fact, for all $t\in \br$ and $u\in \mathscr E$, one has (we skip $i$ and $J$ for the sake of simplicity):
	{\footnotesize 
		\begin{align*}
		\mathscr F(\kappa u)(t)
		&= \kappa u(t)-L^{-1}\Big(f(t,\kappa u(t), \kappa \bold u_t, \kappa \dot u(t))-\kappa u(t)\Big)\\
		&=u(-t) -L^{-1}\Big( f(t, u(-t),u(-t + \tau_1), \dots, u(-t + \tau_m)), -\dot u(-t))-u(-t)\Big) \; (\text{by \eqref{eq:mathcal-G-action} and \eqref{eq:kappa-action-u_t}})\\
		&=u(-t) -L^{-1}\Big( f(t, u(-t),u(-(t  + 2\pi - \tau_1)),\dots, u(-(t + 2\pi - \tau_m)),- \dot u(-t))-u(-t)\Big) \; \text{(by periodicity of $u$)}\\
		&=u(-t) -L^{-1}\Big( f(t, u(-t),u(-(t  + 2\pi - \tau_1)),\dots, u(-(t + 2\pi - \tau_m)),  \dot u(-t))-u(-t)\Big) \; \text{(by (A4)(i))} \\
		&=u(-t) -L^{-1}\Big( f(t, u(-t),u(-t-\tau_m),\dots, u(-t-\tau_1),\dot u(-t))-u(-t)\Big) \; \text{(by \eqref{eq:tauj})}\\
		&=u(-t) -L^{-1}\Big( f(t, u(-t),u(-t-\tau_1),\dots, u(-t-\tau_m),\dot u(-t))-u(-t)\Big)  \; \text{(by (A4)(iv))} \\
		&=u(-t) -L^{-1}\Big( f(-t, u(-t),u(-t-\tau_1),\dots, u(-t-\tau_m),\dot u(-t))-u(-t)\Big)  \; \text{(by (A4)(ii))} \\
		&=\kappa u(t)-\kappa L^{-1}\Big( f(t,u(t),u(t-\tau_1), \dots, u(t-\tau_m),\dot u(t))-u(t)\Big)\; \text{(by \eqref{eq:mathcal-G-action})}\\
		&=\kappa\Big(u(t)-L^{-1}\big(f(t,u(t),\bold u_t,\dot u(t)) -u(t)\Big)\\
		&=\kappa \mathscr F(u)(t).
		\end{align*}
		}

\noi(b) Follows from Lemma 
%\ref {lem:f1} and  
\ref {lem:f2}.\\
(c) Follows from {\rm (A5)}, boundedness of $L^{-1}$ and $G$-equivariance of $\mathscr F$.\\
(d)  Define the linear homotopy
$H:[0,1]\times \mathscr E\to \mathscr E$ such that 
\[H(\lambda, u):=(1-\lambda)\mathscr Au+\lambda\mathscr F(u).\]
Suppose for contradiction, that there exists a sequence $\{\lambda_n, \,u_n\}$, such that $\lambda_n\to \lambda_0$, $u_n\to  0$, $u_n \neq 0$, 
%as $n\to \infty$ 
and 
%we have 
\begin{equation}\label{eq:vspo1}
0=H(\lambda_n,u_n) = (1-\lambda_n)\mathscr Au_n+\lambda_n\mathscr F(u_n) =\mathscr Au_n+\lambda_n (\mathscr F(u_n)-\mathscr Au_n).
\end{equation}
Dividing \eqref{eq:vspo1} by $\|u_n\| \neq 0$ yields:
\[
0=\mathscr A\frac {u_n}{\|u_n\|}+\lambda_n\frac {\mathscr F(u_n)-\mathscr Au_n}{\|u_n\|}.
\]
Put $v_n:=\frac {u_n}{\|u_n\|}$. Then:
\begin{equation}\label{eq:vspo2}
0=\mathscr Av_n+\lambda_n \frac {\mathscr F(u_n)-\mathscr Au_n}{\|u_n\|}.
\end{equation}
Since $ \|u_n\|\to 0$ and $\{\lambda_n\}$ is bounded,  item (c) implies:
\[
\lim_{n\to \infty}\;\;\; \frac {\mathscr F(u_n)-\mathscr Au_n}{\|u_n\|}=0.
\]
Therefore (see \eqref{eq:vspo2}), one has: 
\begin{equation}\label{eq:vspo4}
\lim_{n\to \infty}\mathscr Av_n=0.
\end{equation}
On the other hand, 
\begin{equation}\label{eqvspo5}
\mathscr A = \id - R \quad  \left(R:=L^{-1} \circ (\mathfrak A \circ J - i \circ \id)\right)
\end{equation} 
%Then,
%since 
%\[
%\begin
%\mathscr A = \id - R 
%L^{-1}(\mathfrak A(J)-\id)
%$
%\text{where}\; R:=L^{-1}(\mathfrak A(J)-\id)
%\]
is a compact linear field ($i$ and $J$ are compact). Hence, by passing to a subsequence, one can assume without loss of generality 
that $Rv_n\to v_0$. But this implies $v_n\to v_0$ (see \eqref{eq:vspo4}) and \eqref{eqvspo5}), that is $v_0\in \ker\mathscr A$. However, $\|v_0\|=\lim_{n\to \infty}\|\frac{u_n}{\|u_n\|}\|=1$, and one arrives at the contradiction to the fact that $
\mathscr A$ is an isomorphism.
\end{proof}

\subsection{Abstract equivariant degree based result}  

Under the assumptions  \eqref{eq:tauj} and  {\rm (A0)--(A6)}, the $G$-equivariant degree $\gdeg(\mathscr A,B(\mathscr E))  \in A_0(G)$ is correctly defined
provided that $\mathscr A$ is an isomorphism (here $B(\mathscr E)$ denotes the unit ball in $\scrE$ and $A_0(G)$ stands for the Burnside ring of $G$).
Put 
\begin{equation}\label{eq5}
\omega := (G) - \gdeg(\mathscr A,B(\mathscr E)).
\end{equation}
We are now in a position to formulate the abstract result.
\begin{proposition} \label{th:app3:abstract} Assume that \eqref{eq:tauj} takes place and the function   $f:\br \times\bfV \times\bfV^m\times \bfV\to \bfV$  satisfies the conditions {\rm (A0)--(A6)}.  Assume, in addition, that $\mathscr A:\mathscr E\to \mathscr E$ is an isomorphism (see \eqref{eq:linA}). Assume, finally, that 
\begin{equation}\label{eq:vspo7}
%\gdeg(\mathscr F,\Omega)
\omega = n_1(H_1) +n_2(H_2) +\dots n_k(H_k), \quad n_j\not=0, \; (H_j) \in\Phi_0(G), \; j =1,2,\dots,k
\end{equation}
(cf. \eqref{eq5}). Then:
\begin{itemize}
\item[(a)] for every $j=1,2,\dots,k$ there exists a solution $u\in \mathscr E$ to \eqref{eq:delay-eq}  satisfying $G_u\ge H_j$;
\item[(b)] if, in addition, $H_j\ge D_1^z:= \{(1,1,e),(\kappa, -1, e)\}$, then $u$ is a non-constant periodic solution (here $e\in \Gamma$ stands for the neutral element of $\Gamma$).
\end{itemize}
% for every $j=1,2,\dots,k$ there exists a solution $u\in \mathscr E$ to \eqref{eq:delay-eq}  satisfying $G_u\ge H_j$.  Moreover, if $H_j\ge D_1^z:= \{(1,1,e),%(\kappa, -1, e)\}$ (here $e\in \Gamma$ stands for neutral element of $\Gamma$) then $u$ is a non-constant periodic solution.
\end{proposition}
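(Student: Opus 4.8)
The plan is to compute the $G$-equivariant degree $\gdeg(\mathscr F,\Omega)$ on a large annulus $\Omega$ and to extract, from each nonzero coefficient of $\omega$, a solution of the prescribed orbit type. Every analytic ingredient needed --- $G$-equivariance, the a priori bound, and the two admissible homotopies --- has already been established in Lemma \ref{lem:f2} and Proposition \ref{pro:app3:mapF}; thus the argument reduces to an application of the additivity, homotopy, normalization and existence properties of the Brouwer equivariant degree, supplemented by a one-line symmetry computation for part (b).

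First I would choose $C>0$ large enough for Lemma \ref{lem:f2} to apply, so that $\mathscr F_\lambda$ (hence $\mathscr F=\mathscr F_1$) has no zeros on $\partial B_C(0)$, and I would take $\ve\in(0,C)$ from Proposition \ref{pro:app3:mapF}(d), so that $\mathscr F$ has no zeros on $\partial B_\ve(0)$. On the large ball, Proposition \ref{pro:app3:mapF}(b) provides a $B_C(0)$-admissible $G$-homotopy from $\mathscr F$ to $\id$; by homotopy invariance and the normalization property,
\[
\gdeg(\mathscr F, B_C(0)) = \gdeg(\id, B_C(0)) = (G).
\]
On the small ball, Proposition \ref{pro:app3:mapF}(d) provides a $B_\ve(0)$-admissible $G$-homotopy from $\mathscr F$ to $\mathscr A$, and since the degree of a linear isomorphism is independent of the radius of the ball,
\[
\gdeg(\mathscr F, B_\ve(0)) = \gdeg(\mathscr A, B(\mathscr E)).
\]
Applying the additivity property to the splitting of $B_C(0)$ into $B_\ve(0)$, the open annulus $\Omega:=B_C(0)\setminus \overline{B_\ve(0)}$, and the separating sphere $\partial B_\ve(0)$ (on which $\mathscr F$ is admissible), I obtain
\[
\gdeg(\mathscr F,\Omega) = \gdeg(\mathscr F, B_C(0)) - \gdeg(\mathscr F, B_\ve(0)) = (G) - \gdeg(\mathscr A, B(\mathscr E)) = \omega,
\]
with $\omega$ as in \eqref{eq5}.

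For part (a), hypothesis \eqref{eq:vspo7} says $\text{coeff}^{H_j}(\omega)=n_j\neq 0$ for each $j$. The existence property of the equivariant degree then yields, for every $j$, a zero $u_j\in\Omega$ of $\mathscr F$ with isotropy $G_{u_j}\ge H_j$; since $\Omega$ avoids the origin, $u_j\neq 0$. Because $\mathscr F(x)=0$ is equivalent to \eqref{eq:delay-eq}, each such $u_j\in\mathscr E=C_{2\pi}^2(\br;\bfV)$ is a $2\pi$-periodic solution of \eqref{eq:delay-eq} with $G_{u_j}\ge H_j$. For part (b), assume in addition $H_j\ge D_1^z$. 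Then $(\kappa,-1,e)\in D_1^z\le G_{u_j}$, so by the action \eqref{eq:mathcal-G-action} (taken with $\theta=0$, sign $-1$ and $\gamma=e$) we get $u_j(t)=(\kappa,-1,e)u_j(t)=-u_j(-t)$ for all $t\in\br$; that is, $u_j$ is odd. An odd constant function vanishes identically, contradicting $u_j\neq 0$, so $u_j$ is non-constant.

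The proof is, in essence, an assembly of degree-theoretic facts already licensed by Proposition \ref{pro:app3:mapF}; the only step requiring genuine care is the additivity computation, where admissibility of $\mathscr F$ on both bounding spheres $\partial B_\ve(0)$ and $\partial B_C(0)$ must be verified --- and this is precisely what the choices of $\ve$ and $C$ secure. The symmetry argument in (b) is the single new computation, and it is immediate once the $(\kappa,-1,e)$-action is written out explicitly.
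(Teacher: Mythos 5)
Your proposal is correct and follows essentially the same route as the paper's own proof: the same annulus $\Omega=B_C(0)\setminus\overline{B_\ve(0)}$, the same two admissible homotopies from Proposition \ref{pro:app3:mapF}, the same use of additivity, normalization and the existence property to get $\gdeg(\mathscr F,\Omega)=\omega$, and the same oddness argument $u(t)=-u(-t)$ combined with $0\notin\Omega$ for part (b). No discrepancies to report.
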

\begin{proof}
(a) Take the constants $C > 0$ and $\ve>0$ provided by  Proposition \ref{pro:app3:mapF}. Combining the 
equivariant homotopy invariance of the Brouwer $G$-equivariant degree with  Proposition \ref{pro:app3:mapF}((a),(b)), one obtains
 $\gdeg(\mathscr F,B_C(0)) =\gdeg(\id,B_C(0))=(G)$. By the same reason, Proposition \ref{pro:app3:mapF}((a),(c),(d)) implies
 $\gdeg(\mathscr F,B_\ve(0)) = \gdeg(\mathscr A,B_\ve(0)) = \gdeg(\mathscr A,B(\mathscr E))$. Put $\Omega:=B_C(0)\setminus \overline{B_\ve(0)}$. 
 Then, by additivity of the Brouwer $G$-equivariant degree,  one has: 
\begin{equation}\label{eq:vspo8}
\gdeg(\mathscr F,\Omega)=\gdeg(\mathscr F,B_C(0))-\gdeg(\mathscr F,B_\ve(0))=(G)-\gdeg(\mathscr A,B_\ve(0)).
\end{equation}
Combining \eqref{eq:vspo8} with \eqref{eq:vspo7} and the existence property of the Brouwer $G$-equivariant degree yields statement (a). 

\medskip

(b)  Condition $H_j\ge D_1^z:= \{(1,1,e),(\kappa, -1, e)\}$ implies:
\[  
\forall_{t\in \br}\;\;\;(\kappa,-1)u(t)=-u(-t)=u(t). 
\]
Hence, $u$ is an odd function. Since $0 \not\in \Omega \ni u$, it follows that $u$ is not a constant function.  
\end{proof}

\section{Computation of $\omega$ and main results}\label{sec:degree-computation}
Proposition \ref{th:app3:abstract} reduces the study of problem \eqref{eq:delay-eq}  to computing the equivariant invariant $\omega$ (cf. \eqref{eq5} and 
\eqref{eq:vspo7}). In this section, we, first, analyze the equivariant spectral data required for the computation of 
$ \gdeg(\mathscr A,B(\mathscr E))$ (see \eqref{eq5}). Next, by linking these data to basic $G$-equivariant degrees, we present effective results on the existence and symmetric properties of periodic solutions to problem \eqref{eq:delay-eq}.

\subsection{$G$-isotypic decomposition of $\mathscr E$}
We will use the notations introduced in Subsection \ref{sub-sec-symmetries}. 
%Consider the additional $O(2)$-action on the space $\mathscr E$, where the rotation $e^{i\theta}$ acts on on functions $x\in \mathscr E$ by shifting the %argument, i.e.  $(e^{i\theta} x)(t):=x(t+\theta), \quad t\in \br$ and $\kappa\in O(2)$ acts by time-reversion.
%Since $A_{m-j+1}=A_j$, the operator $\mathscr A$ is $\mathfrak G$-equivariant. 
With an eye toward  determining equivariant spectral data of $\mathscr A$, consider, first, the $O(2)$-isotypic decomposition of the space $\mathscr E$
corresponding to its Fourier modes:  %determine the spectral properties of $\mathscr A$, namely
\begin{align}\label{dcp}
\scrE=\overline{\bigoplus\limits_{k=0}^{\infty}{\mathscr E}_k},\quad{\mathscr E}_k:=\{\cos(kt)u+ \sin(kt) v : u,\, v \in \bfV\},
\end{align}
where, for $k\in \bn$, the representation ${\mathscr E}_k$ is equivalent to the complexification $\bfV^c := \bfV \oplus i \bfV$ of 
$\bf V$ (considered as a {\it real} $O(2)$-representation), where the rotations $e^{i\theta}\in SO(2)$ act on vectors $\bold z\in \bfV^c$ by $e^{i\theta}(\bold z) :=e^{-ik\theta}\cdot \bold z$ (here `$\cdot$'  stands for complex multiplication) and $\kappa \bold z:=\overline {\bold z}$ 
(cf. \eqref{eq:group-mathcal-G}--{\eqref{eq:mathcal-G-action}). Indeed, the linear isomorphism $\vp_k : \bfV^c\to {\mathscr E}_k$ given by 
\begin{equation}\label{eq:complexification}
\vp_k(x+iy):= \cos(kt) u + \sin(kt) v, \quad u,\, v\in \bfV,
\end{equation}
is $O(2)$-equivariant. Clearly, ${\mathscr E}_0$ can be identified with $\bfV$ with the trivial $O(2)$-action (denote by $\cW_0$ the trivial 
$one$-dimensional $O(2)$-representation). Also, ${\mathscr E}_k$, $k = 1,2,\ldots$, 
is modeled on the irreducible $O(2)$-representation $\cW_k\simeq\mathbb{R}^2$, where $SO(2)$ acts by $k$-folded rotations and $\kappa$ acts by complex conjugation. It follows from \eqref{dcp}--\eqref{eq:complexification} and definition of the operator $L$ that 
\begin{equation}\label{eq:action-L}
 L|_{{\mathscr E}_0}=-\id  \quad \quad \rm{and} \quad L|_{{\mathscr E}_k} = - (k^2+1) \id : \bfV^c \to \bfV^c \quad (k > 0). 
 \end{equation}
Next, we will refine the $O(2)$-isotypic decomposition \eqref{dcp} to the $\mathfrak G := O(2) \times \Gamma$-isotypic decomposition. To this end,
consider 
%a  complete list of all irreducible orthogonal $\Gamma$-representations $\cU_0,\cU_1,\dots, \cU_{\mathfrak r}$ and the associated with it
the  $\Gamma$-isotypic decomposition of $\bfV$, i.e.
\begin{equation}\label{eq:V-decompo-Gamma}
\bfV=\bfV_{0}\oplus \bfV_{1}\oplus\dots \oplus \bfV_{\mathfrak r},
\end{equation}
where $\bfV_l$ is modeled on $\cU_l$ ($l=0,1,\dots,\mathfrak r$;\, $\dim \bfV_l > 0$). Then, the irreducible $\mathfrak G$-representations $\cW_k\otimes \cU_l$ $(k = 0,1,..; \; 
l = 0,...,\tau)$ suggest  the $\mathfrak G$-isotypic components in $\scrE$. More precisely, for the (trivial) $O(2)$-component  $\mathscr E_0\equiv \bfV $, one has the  $\mathfrak G$-isotypic decomposition  
    \begin{equation}\label{iso-0}
     \mathscr E_0=\mathscr E_{0,0}\oplus \mathscr E_{1,0}\oplus\dots \oplus \mathscr E_{\mathfrak r,0},
    \end{equation}
where the isotypic component $\mathscr E_{l,0}$  is equivalent to $\bfV_l$, $l=0,1,\dots,\mathfrak r$ (with the trivial $O(2)$-action), i.e. 
   $\mathscr E_{l,0}$  is modeled on the irreducible representation $\cU_l$.  At the same time, for $k>0$,
 the $\mathfrak G$-isotypic decomposition of $\mathscr E_k$ is 
    \begin{equation}\label{eq:iso-k}
      \mathscr E_k=\mathscr E_{0,k}\oplus \mathscr E_{1,k}\oplus\dots \oplus \mathscr E_{\mathfrak r,k},
    \end{equation}
    where 
    \begin{equation}\label{eq-isotyp-O2-Gamma}
  \mathscr E_{l,k}:=   \{\cos(kt)u_o+ \sin(kt) v_o:u_o,\, v_o\in \bfV_l\}.
    \end{equation}
In order to obtain from \eqref{dcp}--\eqref{eq-isotyp-O2-Gamma} the $G:=D_1 \times \mathbb Z_2 \times \Gamma$-decomposition of 
$ \mathscr E$, put $\mathscr G :=  D_1\times \bz_2 \simeq D_1\times \bz_2\times \{e\}\le G$ and denote   by $\cV^+=\br$ and $\cV^-=\br$ two irreducible 
$\mathscr G$-representations given by:  
\[
\forall_{x\in \cV^\pm} \;\;\; (\kappa,1)x=\pm x, \quad  (1,-1)x = - x.
\]
Then, clearly, $\mathscr G_x=D_1$ for any non-zero $x\in \cV^+$, and $\mathscr G_x=D_1^z$ for any non-zero $x\in \cV^-$. Put 
\begin{equation}\label{eq:cU-l-pm}
\cU_l^\pm:= \cV^\pm \otimes \cU_l \quad\quad (l=0,1,\dots,\mathfrak r).
\end{equation}  
Then (see \eqref{iso-0}), the subspace $\mathscr E_{l,0}$ is isotypic and modeled on $\cU_l^+$.
% or $\cU_l^-$. 
At the same time, for $k>0$ (see \eqref{eq:iso-k} and \eqref{eq-isotyp-O2-Gamma}), one has: 
\[
\mathscr E_{l,k}=\mathscr E_{l,k}^+\oplus \mathscr E_{l,k}^-,
\]
where 
\begin{equation}\label{eq:vspo10}
\mathscr E^{+}_{ l,k}=\{ \cos(kt) u_o: u_o\in \bfV_l\}, \quad  \mathscr E^{-}_{l,k}=\{\sin(kt)v_o: v_o\in \bfV_l\}
\end{equation}
and are modelled on $\cU_l^+$ and $\cU_l^-$ respectively ($l=0,1,\dots,\mathfrak r$).
%It is clear that each $O(2)$-isotypic component $\mathscr E_k$ (which corresponds to the $k$-th Fourier mode)
%%, for $k=1,2,\dots,$ 
%is modeled on the irreducible $O(2)$-representation $\cW_k\simeq\mathbb{R}^2$, where $SO(2)$ acts by $k$-folded rotations, i.e. $\xi z:=\xi^k\cdotp z$  %and $\kappa z=\overline{z}$ ($ \xi\in S^1\simeq SO(2)$, $z= x+i y=(x,y)\in\mathbb{R}^2$)   where  '$\cdot$' denotes the complex multiplication.  The 
%$O(2)$-representation $\mathscr E_k$, for $k\in \bn$, is equivalent to the complexification $\bfV^c:=\bfV\oplus i\bfV$  of $\bfV$, where the rotations 
%$e^{i\theta}\in SO(2)$ act on vectors $\bold z\in \bfV^c$ by $e^{i\theta}\bold z:=e^{-ik\theta}\cdot \bold z$ (here `$\cdot$'  stands for complex %multiplication). Indeed,  the linear isomorphism $\vp_k:\bfV^c\to \mathscr E_k$ given  by 
%\[
%\vp_k(x+iy):= \cos(kt) x+\sin(kt)y, \quad x,\, y\in \bfV,
%\]
%is the required equivalence, i.e. for every $\xi=e^{i\theta}$ we have 
%$\vp_k\Big(\xi (x+iy)\Big)= \xi (\cos(kt)x+\sin(kt)y)$.
Thus, the $G$-isotypic decomposition of $\mathscr E$ is given by
\begin{equation}\label{eq:isoE}
\mathscr E=\bigoplus_{l=0}^{\mathfrak r} \Big(\mathscr E^{+,l}\oplus \mathscr E^{-,l}\Big),
\end{equation}
where 
\begin{equation}
\mathscr E^{+,l} = \mathscr E_{l,0}\oplus \overline{ \bigoplus_{k=1}^\infty  \mathscr E_{l,k}^+ },\quad\quad  
\mathscr E^{-,l} = \overline{ \bigoplus_{k=1}^\infty  \mathscr E_{l,k}^- },
\end{equation}
$\mathscr E_{l,0}$ is modeled on   $\cU_l^+$
% or $\cU_l^-$, 
and $\mathscr E_{l,k}^+ $ and 
$\mathscr E_{l,k}^- $ are described in \eqref{eq:vspo10} and are modelled 
%are the $G$-isotypic components modeled 
on $\cU_l^+$ and $\cU_l^-$ respectively ($l=0,1,\dots,\mathfrak r$).

\subsection{Spectrum of $\mathscr A$}
By Proposition \ref{pro:app3:mapF}(c), the operator $\mathscr A$ is $G$-equivariant. Since the linearization of \eqref{eq:delay-eq} at the origin is
autonomous (cf. {\rm (A5)} and \eqref{eq:oper:A}--\eqref{eq:linA}),  $\mathscr A$ is $\mathfrak G$-equivariant, which can be used 
%Since $A_{m-j+1}=A_j$, the operator $\mathscr A$ is $\mathfrak G$-equivariant. One can use the $O(2)$-isotypic decomposition of the space $\mathscr %E$ 
to determine spectral properties of $\mathscr A$. To be more specific, the $\Gamma$-equivariance of the matrices $A_j$  (see assumptions (A5) and (A6)) implies $A_j(\bm V_l)\subset \bm V_l$  ($j=0,1,\dots,m$, $l=0,1,\dots,\mathfrak r$), thus one can put $ A_{j,l}:=A_j|_{\bm V_l}$. Combining this with 
\eqref{eq:oper:A}--\eqref{eq:linA} and $\mathfrak G$-equivariance of $\mathscr A$  implies  $\scrA(\mathscr E_{l,k})\subset \mathscr E_{l,k}$,  ($j=0,1,\dots,\mathfrak r$, $l=0,1,2,\dots,\mathfrak r$), thus one can put  $\scrA_{l,k}:=\scrA|_{\mathscr E_{l,k}}$.

%Then by $\mathfrak G$-equivariance of  $\mathscr A$  we have that   $\scrA_k(\mathscr E_{l,k})\subset \mathscr E_{l,k}$,  ($j=0,1,\dots,\mathfrak r$, 
%$l=0,1,2,\dots,\mathfrak r$)  so   we can put  
%   $\scrA_{l,k}:=\scrA_k|_{\mathscr E_{l,k}}$.
   \vs 
To simplify our exposition, we replace assumption {\rm (A5)} by the following one: 
%(for technical reasons)
%   we introduce  the following condition :
\medskip
\begin{itemize} 
\item[(A7)]   The $n\times n$-matrices $A_j$, $j=0,1,\dots,m$, satisfy  $A_{j,l}=\mu_j^l \id_{\bfV_l}$
for some number $\mu_j^l\in \br$ and $l=0,1,2,\dots, \mathfrak r$.
\end{itemize}

   \vs
\noindent   
Put  $\zeta_j:=e^{-i\tau_j}$, $j=1,2,\dots,m$.  Then, combining \eqref{eq:tauj}, {\rm (A0)}--{\rm (A7)},  \eqref{eq:oper:A}--\eqref{eq:linA} and \eqref{eq:action-L},  
one  obtains: 
\begin{equation}\label{eq:Ajk}
\mathscr A_{l,k}=\left[ 1+\frac 1{k^2+1} \left(\mu_0^l +\sum_{j=1}^m \mu_j^l\zeta_j^k -1\right)\right] \id_{\mathscr E_{l,k}}.
\end{equation}
Formula \eqref{eq:Ajk} implies that the spectrum of $\sigma(\mathscr A)$ consists of the  eigenvalues 
\begin{align}\label{eq-spectrum-mathcal-A}
\xi_{l,k}&:=1+\frac 1{k^2+1} \left(\mu_0^l +\sum_{j=1}^m \mu_j^l\zeta_j^k  -1\right) \notag\\
&= 1+\frac1{k^2+1} \left(\mu^l_0+ \sum_{j=1}^{\lfloor\frac {m}2\rfloor}2\mu_j^l\cos(k\tau_j)+\ve_m(-1)^k\mu^l_{\frac {m+1}2}-1 \right),
\end{align}
for $k=0,1,2,\dots$, $l=0,1,2,\dots, \mathfrak r$, where 
\begin{equation}\label{eq:formu-epsilon-m}
\ve_m=\begin{cases} 1 &\text{ if $m$ is odd}\\ 0 &\text{ if $m$ is even} \end{cases}.
\end{equation}

\begin{remark}\label{rem:non-equiv-multiplicity}
Notice that the (usual) multiplicity $m(\xi_{l,k})$ of $\xi_{l,k} \in \sigma(\mathscr A)$ is equal to 
\[
m(\xi_{l,k})=\begin{cases}
\text{dim\,} \bfV_l &\text{ for $k=0$},\\
2 \,\text{dim\,} \bfV_{l} &\text{ for $k>0$}.
\end{cases}
\]
\end{remark}
Since $\mathscr A$ is a {\it compact} linear field, one obtains:

\vs
\begin{corollary}\label{cor:non-singular-oper} 
Under the assumptions \eqref{eq:tauj}, {\rm (A0)}--{\rm (A7)},  the operator $\mathscr A$ is an isomorphism   if and only if  for every $k=0,1,2,\dots$ and $l=0,1,2,\dots, \mathfrak r$,  one has
\begin{equation}\label{eq:IsoA}   
-\mu_0-\sum_{j=1}^{\lfloor\frac {m}2\rfloor}2\mu_j^l\cos(k\tau_j)-\ve_m(-1)^k\mu^l_{\frac {m+1}2}\not=k^2.
\end{equation}
\end{corollary}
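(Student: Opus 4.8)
The plan is to obtain the characterization of when $\mathscr A$ is an isomorphism by exploiting the key structural fact, already established in \eqref{eq:Ajk}, that each restriction $\mathscr A_{l,k}$ acts on the isotypic component $\mathscr E_{l,k}$ as a \emph{scalar} multiple of the identity, with scalar equal to the eigenvalue $\xi_{l,k}$ computed in \eqref{eq-spectrum-mathcal-A}. Since $\mathscr E$ decomposes as the (closed) direct sum of these components (see \eqref{eq:isoE}), the operator $\mathscr A$ is block-diagonal with respect to this decomposition, and hence $\mathscr A$ is invertible if and only if every block $\mathscr A_{l,k}$ is invertible. As each block is scalar, this reduces to requiring $\xi_{l,k} \neq 0$ for all admissible $k$ and $l$.

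First I would invoke the spectral decomposition: because $\mathscr A$ is a compact linear field (i.e.\ of the form $\id - R$ with $R$ compact, as recorded in \eqref{eqvspo5}), it is a Fredholm operator of index zero, so injectivity is equivalent to surjectivity and hence to bijectivity. Thus $\mathscr A$ fails to be an isomorphism precisely when $0 \in \sigma(\mathscr A)$, i.e.\ when $\xi_{l,k} = 0$ for some pair $(l,k)$. The condition $\xi_{l,k} \neq 0$ is then transcribed directly from \eqref{eq-spectrum-mathcal-A}: setting
\[
\xi_{l,k} = 1 + \frac{1}{k^2+1}\left(\mu_0^l + \sum_{j=1}^{\lfloor m/2\rfloor} 2\mu_j^l \cos(k\tau_j) + \ve_m(-1)^k \mu^l_{\frac{m+1}{2}} - 1\right) \neq 0,
\]
I would multiply through by $k^2+1$ (which is nonzero) and rearrange to isolate $k^2$ on one side, yielding exactly the stated inequality \eqref{eq:IsoA}.

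The only genuine care needed is twofold. First, one must confirm that invertibility on the dense algebraic direct sum extends to an isomorphism of the completed space $\mathscr E$: since the eigenvalues $\xi_{l,k} \to 1$ as $k \to \infty$ (the correction term is $O(1/k^2)$), they are bounded away from $0$ for all large $k$, so only finitely many blocks can be singular, and the inverse is a bounded operator on each component with uniformly bounded norm; this guarantees $\mathscr A^{-1}$ is a well-defined bounded operator on all of $\mathscr E$. Second, one should note that the scalar identification in \eqref{eq:Ajk} uses assumption {\rm (A7)}, under which each $A_{j,l} = \mu_j^l \id_{\bfV_l}$, so the block is genuinely scalar rather than merely block-diagonal; this is what makes the single scalar condition $\xi_{l,k} \neq 0$ necessary and sufficient.

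I expect the main obstacle to be purely bookkeeping rather than conceptual: namely, keeping the convergence-of-eigenvalues argument clean so that the passage from the algebraic direct sum to the Hilbert (or Banach) space $\mathscr E$ is rigorous. Since $\xi_{l,k} \to 1$, the set of singular blocks is finite and the remaining infinitely many blocks are uniformly invertible, so there is no subtlety about $0$ being an accumulation point of the spectrum. With that observation in hand, the equivalence is immediate, and the algebraic rearrangement of $\xi_{l,k} = 0$ into the form \eqref{eq:IsoA} is routine.
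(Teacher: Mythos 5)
Your proposal is correct and follows exactly the route the paper intends: the paper states the corollary as an immediate consequence of the scalar form of $\mathscr A_{l,k}$ in \eqref{eq:Ajk} and the remark that $\mathscr A$ is a compact linear field, offering no further proof. Your filling-in of the details (Fredholm index zero, $\xi_{l,k}\to 1$ so only finitely many blocks can be singular and the inverse is uniformly bounded, and the algebraic rearrangement of $\xi_{l,k}\neq 0$ into \eqref{eq:IsoA}) is precisely the standard argument the authors are invoking.
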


\vs

\subsection{Main symmetric result}\label{sec:app3:existence} Given \eqref{eq:V-decompo-Gamma},
%For a given $G$-representation $\bm W$, denote by $\mathfrak M_G(\bm W)$ the set of all maximal orbit types in $\bm W\setminus \{0\}$.
put  
\begin{equation}\label{eq:ml-mult}
m_l:=\text{dim\,} \bfV_l/\text{dim\,} \cU_l,\quad  l=0,1,\dots, \mathfrak r,
\end{equation}
and for $k\ge 0$, 
\begin{equation}\label{eq:nu-l-k}
\nu(l,k):=
\begin{cases}
m_l &\text{if }\displaystyle \mu^l_0+\sum_{j=1}^{\lfloor\frac {m}2\rfloor}2\mu_j^l\cos(k\tau_j)+\ve_m(-1)^k\mu^l_{\frac {m+1}2} < -k^2,\\
0 &\text{ otherwise}.
\end{cases}
\end{equation}
Define
\begin{equation}\label{eq:m_l}
\mathfrak m_l:= \sum_{k=1}^\infty \nu(l,k), \quad l=0,1,\dots, \mathfrak r.
\end{equation}
Since $\mathscr A$ is a compact vector field,  $\nu(l,k)$ is different from zero only for finitely many  pairs of $(l,k)$, hence the integer $\mathfrak m_l$ is well-defined.
%Let us also denote by $\bold V^-$ the $G$-representation $\bold V$ with the $G$-action given by 
%\[
%(1,-1,\gamma) x:= -\gamma x, \quad (\kappa,1,\gamma)x=-\gamma x, \quad \gamma\in \Gamma.
%\]
\vs
Given a $G$-representation $\bm W$, denote by $\mathfrak M_G(\bm W)$ the set of all maximal orbit types in $\bm W\setminus \{0\}$.
Under the assumptions   \eqref{eq:tauj}, {\rm (A0)}--{\rm (A7)} and \eqref {eq:IsoA}
(i.e. the operator $\mathscr A$ is an isomorphism),  put 
\begin{equation}\label{eq:mathcal-U-pm}
\cU^\pm:= \cU^\pm_0\oplus  \cU^\pm_1\oplus \dots \oplus  \cU^\pm_{\mathfrak r} 
\end{equation}
(cf. \eqref{eq:cU-l-pm}).
Then,  $\mathfrak M_G(\cU^-)\subset \mathfrak M_G(\mathcal U ^- \oplus \mathcal U ^+) =  \mathfrak M_G (\mathscr E)$.
Finally, for $(H)\in \mathfrak M_G(\cU^-)$, define (see \eqref{eq:m_l}):
\begin{equation}\label{eq:max-E} 
\mathfrak m(H):= \sum_{l=0}^{\mathfrak r}  \mathfrak m_l(H), \;\; \text{ where }\; \; \mathfrak m_l(H):= \begin{cases} \mathfrak m_l & \text{ if }\text{\rm coeff}^H(\deg_{\cU_l^-})\not=0,\\ 0 & \text {otherwise}. 
\end{cases}
\end{equation}
\vs 
We are now in a position to formulate our main symmetric result.
\vs 
\begin{theorem}\label{th:app3:main}
Let    $f:\br \times\bfV \times\bfV^m\times \bfV\to \bfV$ satisfy conditions  \eqref{eq:tauj}, {\rm (A0)}--{\rm (A7)} and \eqref {eq:IsoA}. Suppose that for some $(H) \in \mathfrak M_G(\cU^-)$, the number $\mathfrak m(H)$ is odd (cf. \eqref{eq:ml-mult}--\eqref{eq:max-E}).
Then, system \eqref{eq:delay-eq}  admits a non-constant periodic solution $u\in \mathscr E$  such that  $G_u=H$.
\end{theorem}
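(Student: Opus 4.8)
The plan is to deduce the statement from the abstract result, Proposition~\ref{th:app3:abstract}: it suffices to show that the coefficient $\text{coeff}^H(\omega)$ of $(H)$ in $\omega=(G)-\gdeg(\mathscr A,B(\mathscr E))$ is non-zero, and then to upgrade the resulting inequality $G_u\ge H$ to an equality and to non-constancy. The latter two points are cheap and I would dispose of them first. Since $(H)\in\mathfrak M_G(\cU^-)$ and the generator $(\kappa,-1,e)$ of $D_1^z$ acts as $(+1)\otimes\id$ on every $\cV^-\otimes\cU_l$, it fixes all of $\cU^-$; hence every isotropy group of a non-zero vector of $\cU^-$ contains $D_1^z$, so $H\ge D_1^z$ and Proposition~\ref{th:app3:abstract}(b) will yield a non-constant solution. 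Moreover $\mathfrak M_G(\cU^-)\subset\mathfrak M_G(\mathscr E)$, so $(H)$ is maximal in $\mathscr E\setminus\{0\}$; as the solution $u$ from part~(a) lies in $\Omega=B_C(0)\setminus\overline{B_\ve(0)}$ and is therefore non-zero, $G_u\ge H$ together with maximality forces $G_u=H$.

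It remains to compute $\text{coeff}^H(\omega)$, for which I would first evaluate $\gdeg(\mathscr A,B(\mathscr E))$. By Proposition~\ref{pro:app3:mapF}(c) the compact linear field $\mathscr A$ is $G$-equivariant, and \eqref{eq:Ajk} shows it acts on each isotypic piece $\mathscr E_{l,k}$ as the scalar $\xi_{l,k}$ of \eqref{eq-spectrum-mathcal-A}. Homotoping $\mathscr A$ to $-\id$ on the negative eigenspaces and to $\id$ elsewhere and invoking the multiplicativity property \eqref{eq:mult-property}, I obtain a finite product of basic degrees,
\[
\gdeg(\mathscr A,B(\mathscr E))=\Big(\prod_{l=0}^{\mathfrak r}(\deg_{\cU_l^+})^{a_l}\Big)\cdot\Big(\prod_{l=0}^{\mathfrak r}(\deg_{\cU_l^-})^{\mathfrak m_l}\Big),
\]
where $\mathscr E_{l,0}$ and the cosine modes $\mathscr E_{l,k}^+$ are modelled on $\cU_l^+$ and the sine modes $\mathscr E_{l,k}^-$ on $\cU_l^-$. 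The exponent of $\deg_{\cU_l^-}$ is exactly $\mathfrak m_l=\sum_{k\ge1}\nu(l,k)$ from \eqref{eq:m_l}, since the condition $\nu(l,k)=m_l$ is precisely $\xi_{l,k}<0$; the exponent $a_l$ is the analogous count over $k\ge0$ and will turn out to be irrelevant.

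Next I would extract the coefficient of the maximal class $(H)$. The key simplification is that the $\deg_{\cU_l^+}$ factors are transparent: because $(\kappa,-1,e)\in H$ acts as $-\id$ on $\cV^+\otimes\cU_l$, one has $(\cU_l^+)^H=0$, whence $\text{coeff}^H(\deg_{\cU_l^+})=0$ while $\text{coeff}^G(\deg_{\cU_l^+})=1$; since $(H)$ is maximal, multiplying by such factors leaves the coefficient of $(H)$ unchanged. Thus $\text{coeff}^H(\omega)=-\text{coeff}^H\big(\prod_l(\deg_{\cU_l^-})^{\mathfrak m_l}\big)$. To evaluate the latter I would use that for maximal $(H)$ only the classes $(H)$ and $(G)$ lie above $(H)$, so each factor has the shape $\deg_{\cU_l^-}=(G)+\beta_l(H)+(\text{lower})$ with $\beta_l=\text{coeff}^H(\deg_{\cU_l^-})$, together with the Burnside-ring relation $(H)\cdot(H)=|W(H)|\,(H)+(\text{lower})$. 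Propagating these through the product gives a closed recursion for the coefficient of $(H)$, and reducing it modulo $2$ collapses it to the parity of $\sum_{l:\beta_l\neq0}\mathfrak m_l=\mathfrak m(H)$ (cf.\ \eqref{eq:max-E}). Hence $\mathfrak m(H)$ odd forces $\text{coeff}^H(\omega)\neq0$, and Proposition~\ref{th:app3:abstract} completes the argument.

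I expect the Burnside-ring bookkeeping of the last paragraph to be the genuine obstacle: one must justify that only classes $\ge(H)$ survive in each factor, control the quadratic term $(H)\cdot(H)$ and the values $\beta_l$ (for the reflection-type maximal isotropy arising here one has $|W(H)|=2$ and $\beta_l\in\{0,-1\}$, which is what makes the parity clean), and verify that the recursion genuinely reads off the parity of $\mathfrak m(H)$ rather than some finer invariant. Everything else — the spectral product, the vanishing of the $\cU_l^+$ factors, and the passage from $G_u\ge H$ to $G_u=H$ — is routine once this parity computation is in place.
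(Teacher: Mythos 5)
Your proposal is correct and follows essentially the same route as the paper: reduce to Proposition~\ref{th:app3:abstract} by showing $\text{coeff}^H(\omega)\neq 0$, factor $\gdeg(\mathscr A,B(\mathscr E))$ into basic degrees via multiplicativity, discard the $\deg_{\cU_l^+}$ factors (and the $\deg_{\cU_l^-}$ factors with vanishing $H$-coefficient), and run the parity computation at the maximal class $(H)$ using $(H)\cdot(H)=|W(H)|(H)+(\text{lower})$ and $\text{coeff}^H(\deg_{\cU_l^-})=-2/|W(H)|$ — exactly the content of the paper's Lemma~\ref{lem:same-maximal} and formula \eqref{eq:coef-x-o-ireduc}. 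Your explicit justification of $H\ge D_1^z$ and of the upgrade from $G_u\ge H$ to $G_u=H$ via maximality is in fact slightly more careful than the paper, which leaves these points implicit.
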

\begin{proof} For a subspace $K \subset \mathscr E$, denote by $B(K)$ the (open) unit ball in $K$.  Denote by $\sigma_-(\mathscr A)$ the negative spectrum of $\mathscr A$ and for $\lambda \in \sigma _-(\mathscr A)$, denote by $E(\lambda)$ the eigenspace of $\mathscr A$ corresponding to $\lambda$.  Then,  by \eqref{eq-spectrum-mathcal-A}, $\lambda:= \xi_{l,k}$ for some $k=0,1,2,\dots$ and $l=0,1,\dots,\mathfrak r$, where 
\[
\xi_{l,k}= 1+\frac1{k^2+1} \left( \mu^l_0+ \sum_{j=1}^{\lfloor\frac {m}2\rfloor}2\mu_j^l\cos(k\tau_j)+\ve_m(-1)^k\mu^l_{\frac {m+1}2}-1 \right)<0.
\]
By Proposition \ref{th:app3:abstract}, it is sufficient to show that coeff$^H(\omega)\not=0$. Combining the Multiplicativity property of the $G$-equivariant Brouwer degree  (see  Theorem \ref{thm:GpropDeg} in the Appendix) with formula 
\eqref{eq:prod-prop}, 
%\eqref{eq:bdeg-nL} 
yields: 
\begin{align}\label{eq:simplific}
\omega&=(G)-\gdeg(\mathscr A,B(\mathscr E)) =(G)-\prod_{\lambda\in \sigma_-(\mathscr A)}\gdeg(-\id,B(E(\lambda)))\notag\\
&=(G) - \prod_{k=0}^\infty \prod_{l=0}^{\mathfrak r} (\gdeg(-\id,B(\mathscr E_{l,k}))^{\nu(l,k)} \notag\\ 
&= (G)-\prod_{k=0}^\infty \prod_{l=0}^{\mathfrak r} \left( \deg_{\cU_l^+} \right)^{\nu(l,k)} \cdot \prod_{k=1}^\infty \prod_{l=0}^{\mathfrak r} \left( \deg_{\cU_l^- }\right)^{\nu(l,k)}\notag\\
&=(G)-\prod_{k=0}^\infty \prod_{l=0}^{\mathfrak r} \left( \deg_{\cU_l^+} \right)^{\nu(l,k)} \cdot \prod_{(l,k)\in\mathfrak  I}  \left( \deg_{\cU_l^- }\right)^{\nu(l,k)},
\end{align}
where ``$\cdot$" stands for the product in the Burnside ring $A_0(G)$ and the (finite) set $\mathfrak I$ is given by
\[
\mathfrak I:=\Big\{(l,k): k = 1,2,\dots; \; l =  0,1,\dots, \mathfrak r; \; \nu(l,k)>0 \Big\}.
\]
%(notice that $\mathfrak  I$ is a finite set). 
Put 
\[
\Lambda_H:=\Big\{ (l,k)\in \mathfrak I: \text{coeff}^H(\deg_{\cU_l^-})\not=0  \Big\}, \quad \Lambda_H^c:=\mathfrak I \setminus \Lambda_H, 
\]
and 
\[
\mathfrak b:=\prod_{k=0}^\infty \prod_{l=0}^{\mathfrak r} \left( \deg_{\cU_l^+} \right)^{\nu(l,k)} \cdot \prod_{(l,k)\in\Lambda_H^c}  \left( \deg_{\cU_l^- }\right)^{\nu(l,k)}.
\]
Then, \eqref{eq:simplific} together with the Recurrence Formula (see Appendix, formulas \eqref{eq:RF-0} and \eqref{eq:bdeg-nL}) suggest:
\begin{equation}\label{eq-prelast}
\omega =(G)-\mathfrak b \cdot  \prod_{(l,k)\in\Lambda_H}  \left( \deg_{\cU_l^- }\right)^{\nu(l,k)} = 
(G)-\mathfrak b \cdot \prod_{p=1}^{\mathfrak m(H)} \big((G)-x_o(H)+\mathfrak c_p \big),
\end{equation}
where coeff$^H(\mathfrak b)=\text{coeff}^H(\mathfrak c_p)=0$ ($p=1,2,\dots , \mathfrak m(H)$), and
%Then, by  \eqref{eq:RF-0}  one has  
\begin{equation}\label{eq:coef-x-o-ireduc}
x_o=\begin{cases}
0 & \; \text{if $\text{coeff}^H(\deg_{\cU_{l}^-})=0$};\\ 
1 &  \; \text{if $\text{coeff}^H(\deg_{\cU_{l}^-})\not=0$ and }|W(H)|=2;\\
2 &    \; \text{if $\text{coeff}^H(\deg_{\cU_{l}^-})\not=0$ and } |W(H)|=1.
\end{cases}
\end{equation} 
Using \eqref{eq:coef-x-o-ireduc}, one can easily establish the following 
\begin{lemma}\label{lem:same-maximal} 
 Let  $(H)\in \mathfrak M_G(\cU^-)$ and $l,l^{\prime} \in \{0,1,...,\mathfrak r\}$. 
\begin{itemize}
\item[(a)] If 
$\text{\rm coeff}^H(\deg_{\cU_{l}^-})\not=0$ and $\text{\rm coeff}^H(\deg_{\cU_{l'}^-})\not=0$, then 
\[
\text{\rm coeff}^{H}(\deg_{\cU_{l}^-}\cdot \deg_{\cU_{l'}^-})=0.\]
\item[(b)] If 
$\text{\rm coeff}^H(\deg_{\cU_{l}^-})\not=0$ and $\text{\rm coeff}^H(\deg_{\cU_{l'}^-})=0$, then 
\[
\text{\rm coeff}^{H}(\deg_{\cU_{l}^-}\cdot \deg_{\cU_{l'}^-})=-x_o\] 
(in particular, different from zero).
\end{itemize}
\end{lemma}
Combining Lemma \ref{lem:same-maximal} with formula \eqref{eq-prelast} and the assumption that $\mathfrak m(H)$ is odd implies
\[
\text{coeff}^H\left[ -\mathfrak b \cdot\prod_{p =1}^{\mathfrak m(H)} \big((G)-x_o(H)+\mathfrak c_p \big)  \right]=-x_o.
\]
Therefore, 
\[
\text{coeff}^H\left( \omega)  \right) = x_o \not=0,
\]
and the conclusion follows from the Existence property of the Brouwer $G$-equivariant degree (see  Theorem \ref{thm:GpropDeg} in the Appendix). 
\end{proof}

\subsection{Non-symmetric result}
In the remaining part of this section, we assume that $\Gamma=\{e\}$, i.e. we are interested in the non-equivariant setting of problem \eqref{eq:delay-eq}. Also, we replace condition {\rm (A7)} by the following more general assumption:

\medskip
\vs
\begin{itemize}
\item[(A7)$^{\prime}$] ~~  For all $j$, $j^{\prime} \in \{0,1,2,\dots, m\}$ we have $A_jA_{j^{\prime}}=A_{j^{\prime}}A_j$.
\end{itemize}
\vs 
Under the assumption {\rm (A7)$^{\prime}$}, the matrices $A_j$, $j = 0,1,...,m$, share their generalized eigenspaces. To be more precise, there exist subspaces $V_l\subset \bfV$, $l=1,2,\dots,\mathfrak s$, such that:
\begin{itemize}
\item[(i)] $\bfV=V_1\oplus V_2\oplus \dots\oplus V_{\mathfrak s}$;
\item[(ii)] $\sigma(A_j)=\{\mu_j^1,\mu_j^2,\dots,\mu_j^{\mathfrak s}  \}$ for any $j = 0,1,...,m$;
\item[(iii)] the generalized eigenspace $E(\mu_j^l)$ of $\mu_j^l$ is exactly $V_l$  for any $j = 0,1,...,m$ and  $l=1,2,\dots,\mathfrak s$.
\end{itemize} 
%$\bfV=V_1\oplus V_2\oplus \dots\oplus V_{\mathfrak s}$, and the spectrum of each  $A_j$, $j = 0,1,...,m$, can be written 
%as follows: $\sigma(A_j)=\{\mu_j^1,\mu_j^2,\dots,\mu_j^{\mathfrak s}  \}$ (in particular, the generalized eigenspace $E(\mu_j^l)$ of $\mu_j^l$ is exactly 
%$V_l$, $l=1,2,\dots,\mathfrak s$).   Then, we can define  (for $ l=1,2,\dots, \mathfrak r$) 
Next, put $m_l:=\text{dim\,} V_l$ and define for any $k = 0,1,...$ 
\begin{equation}\label{eq:nu-k-l}
\nu(l,k):=
\begin{cases}
m_l &\text{if }\displaystyle \mu^l_0+ \sum_{j=1}^{\lfloor\frac {m}2\rfloor}2\mu_j^l\cos(k\tau_j)+\ve_m(-1)^k\mu^l_{\frac {m+1}2}<-k^2,\\
0 &\text{ otherwise}.
\end{cases}
\end{equation}
Finally, put 
\begin{equation}\label{eq:mathfrak-m-non}
\mathfrak m:=\sum_{k=1}^\infty \sum_{l=1}^{\mathfrak s}  \nu(k,l).
\end{equation}

\begin{corollary}\label{cor:app3:non-equi-main} 
Let    $f : \br \times\bfV \times\bfV^m\times \bfV\to \bfV$ satisfy conditions \eqref{eq:tauj}, {\rm (A0)}--{\rm (A5)}, {\rm (A7)$^{\prime}$} (cf. conditions
(i)--(iii) above).
Assume that for each $l=1,2,\dots,\mathfrak s$ and   $k = 0,1,...$
\begin{equation}\label{eq:IsoA-non-equi}
-\mu^l_0-\sum_{j=1}^{\lfloor\frac {m}2\rfloor}2\mu_j^l\cos(k\tau_j)-\ve_m(-1)^k\mu^l_{\frac {m+1}2}\not=k^2.
\end{equation}
Finally, suppose that the number $\mathfrak m$ is odd (see \eqref{eq:nu-k-l}--\eqref{eq:mathfrak-m-non}).
Then, system \eqref{eq:delay-eq}  admits a non-constant periodic solution $u\in \mathscr E$.
\end{corollary}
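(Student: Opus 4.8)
The plan is to follow the method of Theorem \ref{th:app3:main}, now with the group $G = \mathscr{G} := D_1 \times \mathbb{Z}_2$ (since $\Gamma = \{e\}$), and to show that the hypothesis ``$\mathfrak{m}$ odd'' forces $\text{coeff}^{D_1^z}(\omega) \neq 0$; the conclusion then follows from Proposition \ref{th:app3:abstract}, the orbit type $D_1^z$ guaranteeing that the solution is non-constant. The sole point where (A7)$'$ departs from the scalar hypothesis (A7) is that the commuting matrices $A_j$ need not be diagonalizable, so I would begin with a homotopy reduction to the block-scalar case.

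Write $A_j = A_j^{\mathrm{sc}} + N_j$, where $A_j^{\mathrm{sc}} := \bigoplus_{l=1}^{\mathfrak{s}} \mu_j^l \id_{V_l}$ and $N_j := A_j - A_j^{\mathrm{sc}}$ is nilpotent on each common eigenspace $V_l$, and run the linear homotopy $A_j(s) := A_j^{\mathrm{sc}} + (1-s) N_j$, $s \in [0,1]$, with induced operators $\mathscr{A}(s)$. Nilpotency of $(1-s) N_j|_{V_l}$ keeps $\mu_j^l$ the only eigenvalue of $A_j(s)|_{V_l}$ for every $s$, so by the derivation of \eqref{eq:Ajk}--\eqref{eq-spectrum-mathcal-A} each $\mathscr{A}(s)$ carries the unchanged spectrum $\{\xi_{l,k}\}$, and \eqref{eq:IsoA-non-equi} keeps every $\mathscr{A}(s)$ invertible. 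The symmetry $A_j = A_{m+1-j}$ (forced by (A4)(iii) and (A5)) gives $\mu_j^l = \mu_{m+1-j}^l$ and $N_j = N_{m+1-j}$, hence $A_j(s) = A_{m+1-j}(s)$ throughout; a computation identical to Proposition \ref{pro:app3:mapF}(a), with $f$ replaced by its linearization, then shows each $\mathscr{A}(s)$ is $\kappa$-equivariant (the $\mathbb{Z}_2$-equivariance being automatic for a linear odd map). Thus $\mathscr{A}$ is $G$-homotopic through isomorphisms to $\mathscr{A}^{\mathrm{sc}} := \mathscr{A}(1)$ and $\gdeg(\mathscr{A}, B(\mathscr{E})) = \gdeg(\mathscr{A}^{\mathrm{sc}}, B(\mathscr{E}))$.

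For $\mathscr{A}^{\mathrm{sc}}$ the negative eigenspaces are explicit: each pair $(l,k)$ with $\xi_{l,k} < 0$ supplies a copy of $V_l$ in the cosine part (modeled on $\mathcal{V}^+$) and, for $k \geq 1$, a copy in the sine part (modeled on $\mathcal{V}^-$). As in \eqref{eq:simplific}, multiplicativity \eqref{eq:mult-property} yields $\gdeg(\mathscr{A}^{\mathrm{sc}}, B(\mathscr{E})) = (\deg_{\mathcal{V}^+})^{a} (\deg_{\mathcal{V}^-})^{\mathfrak{m}}$, where (in parallel with \eqref{eq:mathfrak-m-non}) the exponents count the negative $\xi_{l,k}$ in the cosine part ($k \geq 0$) and the sine part ($k \geq 1$) respectively, each with weight $m_l = \dim V_l$; in particular the sine exponent is exactly the $\mathfrak{m}$ of the statement. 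A short pass through the recurrence \eqref{eq:rec} for the order-$4$ group $\mathscr{G}$ gives $\deg_{\mathcal{V}^+} = (\mathscr{G}) - (D_1)$ and $\deg_{\mathcal{V}^-} = (\mathscr{G}) - (D_1^z)$; since $(D_1^z) \cdot (D_1^z) = 2(D_1^z)$ in $A_0(\mathscr{G})$ one gets the involution identity $(\deg_{\mathcal{V}^-})^2 = (\mathscr{G})$ (and likewise $(\deg_{\mathcal{V}^+})^2 = (\mathscr{G})$), so the degree depends only on the parities of $a$ and $\mathfrak{m}$. Extracting the $(D_1^z)$-coordinate from the four resulting products (using $(D_1) \cdot (D_1^z) = (\{1\})$) gives $0$ when $\mathfrak{m}$ is even and $-1$ when $\mathfrak{m}$ is odd, whence $\text{coeff}^{D_1^z}(\omega) = 1 \neq 0$ precisely in the odd case.

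The main obstacle is the homotopy reduction of the second paragraph: one must check that the nilpotent deformation simultaneously freezes the spectrum $\{\xi_{l,k}\}$ (preserving invertibility via \eqref{eq:IsoA-non-equi}) and preserves $\kappa$-equivariance (via $A_j(s) = A_{m+1-j}(s)$). Once this is secured, the Burnside-ring computation collapses to the involution identity $(\deg_{\mathcal{V}^\pm})^2 = (\mathscr{G})$, and Proposition \ref{th:app3:abstract} delivers the non-constant solution.
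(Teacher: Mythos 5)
Your proposal is correct and follows essentially the same route as the paper, whose entire proof of this corollary is to rerun the argument of Theorem \ref{th:app3:main} with $H:=D_1^z$ (and $\Gamma=\{e\}$, so $G\simeq D_1\times\bz_2$): multiplicativity reduces $\gdeg(\mathscr A,B(\mathscr E))$ to powers of the basic degrees $\deg_{\cV^{\pm}}$, the involutivity of these in $A_0(D_1\times\bz_2)$ makes everything depend only on the parity of $\mathfrak m$, and Proposition \ref{th:app3:abstract}(b) yields a non-constant solution from $\text{coeff}^{D_1^z}(\omega)\neq 0$. Your extra homotopy killing the nilpotent parts of the $A_j$ is a fine way to handle {\rm (A7)$'$}, though it is not strictly needed, since the product formula \eqref{eq:prod-prop} is already stated in terms of \emph{generalized} eigenspaces.
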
 
\begin{proof} Apply the same arguments as in the proof of Theorem \ref{th:app3:main} with $H:=D_1^z$. 
\end{proof}
\vs
%%%%%%%%%%%%%%%%%%%%%%%%%%%%%%%%%%%%%%%%%%%%%%%%%

\section{Examples 
%of System \eqref{eq:delay-eq}
}\label{sec:app3.3}
\subsection{Examples of $f$   satisfying conditions (A0)--(A7)}
Consider the space $\bfV:=\br^n$  (with the norm {\it max}).
For the purpose of presenting an example of a map $f$ satisfying all the required assumptions,
consider a map $F : \mathbb R \times \bfV\times \bfV^m\times\bfV\to \bfV$ given by 
\begin{equation}\label{eq:form-F}
F(t,x,\bold y,z)=(p_1(t,x,\bold y,z),p_2(t,x,\bold y,z),\dots ,p_n(t,x,\bold y,z))^T\in \bfV,
\end{equation}
where 
\begin{equation}\label{eq:formula-p_i}
p_i(t,x,\bold y,z)=x_i^rQ_i(t,x,\bold y,z) + q_i(t,x,\bold y)+z_i ^2q_k(t,x,\bold y) \quad(i = 1,...,n) 
\end{equation}
satisfies the following conditions (which can be easily provided!):
\begin{itemize}
\item[(i)]  $r\ge 3$ is an odd integer;
\item[(ii)]  $Q_i(t,x,\bold y,z$) is continuous, $2\pi$-periodic in $t$-variable and 
\begin{equation*}
\overline{Q}: = \inf \{Q_i(t, x,\bold y,z) \,: i=1,\dots,n; \,  t \in \mathbb R; \, x,z \in \bfV; \, \bold y \in \bfV^m\} > 0;
\end{equation*}
%\ (for all $x\in \bfV$ is a polynomial with respect to $x$ and $\bold y$ (for a fixed $t$), 
\item[(iii)] for any fixed $x$, $\bold y$ and $t$,  the function $|Q_i(t, x,\bold y,z)|$ is bounded with respect to $z$; 
%(for fixed $x$ and $\bold y$). 
\item[(iv)]  $q_i(t,x,\bold y)$ is $2\pi$-periodic in $t$-variable and for any fixed $t \in \mathbb R$, the map $q_i(t,x,\bold y)$ is a homogeneous polynomial of degree $d$ such that $r > d >1$;
\item[(v)] $x_i \cdot q_i(t,x,\bold y)\ge 0$ for all   $t \in \mathbb R, \, x  \in \bfV, \, \bold y \in \bfV^m$.
% for $| \bold y|\le |x|$
\end{itemize}
%$q_k(t,x,\bold y)$ is a homogeneous polynomial
%of degree $d$, $r>d>1$, and 
%$x_k\cdot q_k(t,x,\bold y)\ge 0$ for $|\bold y|\le |x|$. We will also assume that  $|Q_k(t, x,\bold y,z)|$ is bounded with respect to $z$ (for fixed $x$ and $\bold y$). Then we also 
Take matrices 
$A_j:\bfV \to\bfV$, $j=0,1,\dots,m$ (to be specified later on), and consider the map 
\begin{equation}\label{eq:the-map-f}
f(t,x,\bold y,z):=A_0x+\sum_{j=1}^m A_j y^j +F(t,x,\bold y,z), \quad x,z\in \bfV, \; \bold y = (y^1,...,y^m)^T \in \bfV^m,\; t\in \br.
\end{equation}
%Then we put  $q_o=\inf\{Q_k(t, x,\bold y,z): k=1,\dots,n, x,\bold y, t\}$, notice that $q_o>0$, 
Clearly, if $F$ is given by  \eqref{eq:form-F} and satisfies  (ii) and (iv), then $f$  satisfies {\rm (A0)}.  Let us show that if $F$ is given by \eqref{eq:form-F} and satisfies (i)--(v), then $f$ satisfies the Hartman-Nagumo conditions {\rm (A1)}--{\rm (A3)}. 
In fact, if $|x|\ge |\bold y|$, then  (ii) and (v) imply:
%(since $x_k\cdot q_k(t,x,\bold y)\ge 0$)
%
\begin{align}\label{eq:prelim-est}
x\bullet f(t,x,\bold y, z)&=\sum_{i=1}^n\Big({x_i^{r+1}Q_i(t,x,\bold y,z)}+{x_iq_i(t,x,\bold y)}+z_i^2x_iq_i(t,x,\bold y)\Big) \notag\\
&+x\bullet A_0 x+x\bullet \sum_{j=1}^m A_j y^j \notag
\\
&\ge\sum_{i=1}^n \overline{Q} x_i^{r+1} - \sum_{i=1}^n x_iq_i(t,x,\bold y) -|A_0||x|^2-\sum_{j=1}^m |A_j|\,| y^j||x| \notag\\
&\ge \overline{Q} |x|^{r+1} - \sum_{i=1}^n x_iq_i(t, x,\bold y) - \left(|A_0|+\sum_{j=1}^m |A_j|\right)\,|x|^2 
\end{align}
By (iv), for any $t \in \mathbb R$, the powers in monomials of $q_i(t,x,\bold y$ satisfy 
\[
d:=\sum_{j=0}^m\sum_{i=1}^n\alpha^j_i < r.
\]
Hence (recall, we assume $|x|\ge |\bold y|$), 
\begin{align}\label{eq:prelim-est1}
\left|x_k\prod_{i=1}^nx_i^{\alpha^0_i}\cdot\prod_{j=1}^m\prod_{i=1}^n (y^j_i)^{\alpha_l^j}\right|
	\le |x|^{d+1}\le (1+|x|)^r.
\end{align}
Combining \eqref{eq:prelim-est} and \eqref{eq:prelim-est1} implies that there exists a constant $a>0$ such that 
\begin{align*}
x\bullet f(t,x,\bold y,z)&\ge \overline{Q} |x|^{r+1}-a(1+|x|^r)-\left(|A_0|+\sum_{j=1}^m |A_j|\right)\,|x|^2 \quad \text{for any $x \in \bfV$},
\end{align*}
which implies (see (i)) that for a sufficiently large $R > 0$, if $|x|>R$, then $x\bullet f(t,x,\bold y, z)>0$, so condition {\rm (A1)} is satisfied. 

\medskip

Put 
{\small \begin{align*}
C&:=\max\left\{  |x_i^rQ_i(t,x,\bold y,z)|+|q_i(t,x,\bold y)|+\sum_{j=0}^m\|A_j\|R  \, : \,  i=1,\dots,n; \; |x|\le R; \; |\bold y|\le R; \, t\in \br\right\}, \\
D&:=\max\{ |q_i(t,x,\bold y)| \, : \, i=1,\dots, n; \; |x|\le R; \; |\bold y|\le R; \,  t\in \br\},
\end{align*}}

\noindent
and observe that by (iii), $C$ is correctly defined. Clearly,
%then for the function  $\phi(s)=C+D|s|^2$, $s\in \br$ the 
condition (A2) is satisfied with $\phi(s)=C+D|s|^2$, $s\in \br$. 

\medskip
Next, put 
{\small \[
K:=\min\left\{ \sum_{i=1}^n\Big(x_i^{r+1}Q_i(t,x,\bold y,z)+x_iq_i(t,x,\bold y)+x\bullet A_0 x+x\bullet \sum_{j=1}^m A_j y^j\Big)  \, : \, |x|\le R, \; |\bold y|\le R,\; t\in \br  \right\}
\]}
(again, $K$ is correctly defined by (iii)). Then, for all $|x|\le R$, $|\bold y|\le R$, $t\in \br$, one has:
\begin{align*}
|f(t,x,\bold y,z)|&\le x\bullet f(t,x,\bold y,z)+K-D|z|^2\\
&\le (D+1)(x\bullet f(t,x,\bold y,z)+|z^2|) + K
\end{align*}
so that condition {\rm (A3)} is satisfied.

It follows immediately from \eqref{eq:form-F}--\eqref{eq:the-map-f} and (i) that condition (A5) is  satisfied. Notice that it is possible to choose the related polynomials in such a way  that condition {\rm (A4)} is also satisfied.  

Assume that $\bfV=\br^n$ is an orthogonal $\Gamma$-representation where $\Gamma\le S_n$ acts on the vectors in $\br^n$ by permuting their coordinates. One can easily assure, by identifying the $\Gamma$-symmetric interactions between the coordinates in $\bfV$, that condition {\rm }(A6) is satisfied. To fulfill condition {\rm (A7)}, it is enough to
suppose that the $\Gamma$-isotypic decomposition of $V$ is of the form 
\[
\bfV=\bfV_0\oplus \bfV_1\oplus \dots\oplus \bfV_r,
\]
where $\bfV_l\simeq \cU_l$ is an irreducible $\Gamma$-representation of real type.
%,  i.e. each of the $\Gamma$-isotypical components $\bfV _l$ is an irreducible $\Gamma$-representation. 
Then, for each of the $\Gamma$-equivariant matrices $A_j$, one necessarily has $A_j|_{\bfV_l}=\mu_j^l \id_{\bfV_l}$, so that  condition {\rm (A7)} is satisfied. 
%{\bf \red I propose add here an example of a system with dihedral symmetries $D_{m+1}$. We can discuss it later.}
\vs
\subsection{Example of system  \eqref{eq:delay-eq} with dihedral symmetries}\label{subsec-Didedral-example} For the sake of simplicity, 
in what follows, we will assume that the dimension $n$  of the space $\bfV$ is {\it odd}. 
\vs
Let us consider, as a particular case of the group $\Gamma$, the dihedral group $D_n\le S_n$, where the rotation $\gamma:=e^{\frac{2\pi i}{n}} $ is identified with the permutation $(1,2,3,\dots,n)$ and the reflection $\kappa$, $\kappa z=\overline z$, with $(2,n)(3,n-1)\dots$. We will also assume that the $D_n$-equivariant matrices  $A_j$ are given by
\begin{equation}\label{eq:Aj}
A_j:=\begin{bmatrix}
   ~ a_j~~&~~b_j~~&0&~~\dots~~&~~0~~&~~b_j~\\
    b_j&a_j&~b_j~&\dots&0&0\\
    0&b_j&a_j&\dots&0&0\\
  \vdots&\vdots&\vdots&\ddots&\vdots&\vdots\\
   0&0&0&\vdots&a_j&b_j\\
    b_j&0&0&\dots&b_j&a_j
    \end{bmatrix} , \quad j=0,1,2,\dots,m.
\end{equation} 
Then, the $D_n$-isotypic decomposition of $\bfV$ is given by
\[
\bfV = \bfV_0\oplus\bfV_1\oplus \bfV_\oplus \dots\oplus \bfV_{\mathfrak r}, \quad \mathfrak r=\left\lfloor \tfrac n2\right\rfloor,
\]
where $\bfV_0 \simeq \mathcal U_0$  is the trivial one-dimensional $D_n$-representation and 
$\bfV_l \simeq \mathcal U_l$, $l = 1, ...,\mathfrak r$,  is the irreducible two-dimensional $D_n$-representation, where $\gamma$ acts on $\mathbb R^2 \simeq \mathbb C$ by usual complex multiplication by $\gamma^l$. Since $\bfV_l$, $l = 0,1,...,\mathfrak r$, are irreducible, it follows that  
\[
A_{j,l}=\mu_j^l\id_{\bfV_l}, \quad \text{ where }\; \mu_j^l=a_j+2b_j\cos \frac{2\pi l}{n}, \;\; l=0,1,\dots, \mathfrak r; \;j = 0,1,...,m.
\]
Clearly, the orbit type $(D_1^z\times D_n)$ is maximal in  $\mathscr E\setminus \{0\}$.  Suppose $n=p_1^{\ve_1}p_2^{\ve_2}\dots p_{\mathfrak k}^{\ve_{\mathfrak k}}$, where $\ve_s>0$,  and $p_s$, $s=1,2,\dots,\mathfrak k$, are the prime numbers such that 
	$2< p_1<p_2<\dots < p_{\mathfrak k}$. Then, for $s\in \{1,2,\dots, \mathfrak k\}$, put 
\begin{equation}\label{eq:-n-s}
n_s:=\frac n{p_s}\quad\quad   \text{and}  \quad\quad  H_s:= (D_1\times \bz_2){^{D_1^z}}\times_{\bz_2}^{\bz_{n_s}}D_{n_s}
\end{equation} 
(see Appendix, formulas \eqref{eq:amalgam-projections}--\eqref{eq:amalg}, for the amalgamated notation used here). One can easily see that 
for each $s = 1,2,\dots, \mathfrak k$,
%for $H_s:= (D_1\times \bz_2){^{D_1^z}}\times_{\bz_2}^{\bz_{n_s}}D_{n_s}$ 
the orbit type $(H_s)$ is maximal in $\mathscr E\setminus \{0\}$ and $(H_s)\in \mathfrak M_G(\cU^-)$.
Notice that for $l=0,1,2,\dots,\mathfrak r$, one has $m_l=1$ (see \eqref{eq:ml-mult}). Therefore, we have the following immediate consequence of Theorem \ref{th:app3:main}.
\vs
\begin{corollary}\label{cor-Dihedral} 
Assume that $\Gamma=D_n\le S_n$ (here $n$ is an odd number) acts on $\bfV:=\br^n$ by permuting coordinates of vectors and suppose that  $n=p_1^{\ve_1}p_2^{\ve_2}\dots p_{\mathfrak k}^{\ve_{\mathfrak k}}$, where $\ve_s>0$,  and $p_s$, $s=1,2,\dots,\mathfrak k$, are prime numbers such that 
	$2< p_1<p_2<\dots < p_{\mathfrak k}$. 
Let    $f:\br \times\bfV \times\bfV^m\times \bfV\to \bfV$ satisfy conditions  \eqref{eq:tauj}, {\rm (A0)}--{\rm (A7)} and \eqref {eq:IsoA}.
%be a   function  such that the conditions {\rm (A1)---(A7)} are satisfied and  the inequalities \eqref{eq:IsoA}  hold. 
Assume, finally, that for some  $s\in \{1,2,\dots, \mathfrak k\}$, the number $\mathfrak m(H_s)$ is odd  (cf. \eqref{eq:-n-s} and \eqref{eq:max-E}).
%(here   $H_s:=  (D_1\times \bz_2){^{D_1^z}}\times_{\bz_2}^{\bz_{n_s}}D_{n_s}$ , $n_s:=\frac n{p_s}$)
Then, system \eqref{eq:delay-eq}  admits a non-constant periodic solution $u\in \mathscr E$  such that  $G_u=H_s$.
% (cf. \eqref{eq:-n-s}.
\end{corollary}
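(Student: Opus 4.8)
The plan is to read Corollary \ref{cor-Dihedral} as the special instance of Theorem \ref{th:app3:main} obtained by taking $\Gamma = D_n$ and $(H) = (H_s)$, so the whole argument reduces to matching the present data against the hypotheses of that theorem and then invoking it. First I would record the representation-theoretic input assembled just above the statement: since $n$ is odd, the permutation $D_n$-action on $\bfV = \br^n$ splits as $\bfV = \bfV_0 \oplus \bfV_1 \oplus \dots \oplus \bfV_{\mathfrak r}$ with $\mathfrak r = \lfloor n/2 \rfloor$, where $\bfV_0 \simeq \cU_0$ is trivial and each $\bfV_l \simeq \cU_l$ ($l \ge 1$) is an irreducible two-dimensional representation. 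Because every isotypic component is itself irreducible, one has $m_l = \dim \bfV_l / \dim \cU_l = 1$ for all $l$ (see \eqref{eq:ml-mult}), and Schur's lemma applied to the circulant matrices $A_j$ of \eqref{eq:Aj} gives $A_{j,l} = \mu_j^l \id_{\bfV_l}$ with $\mu_j^l = a_j + 2 b_j \cos(2\pi l/n)$, confirming condition {\rm (A7)}.

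With this in place, the decisive step is the orbit-type claim $(H_s) \in \mathfrak M_G(\cU^-)$. I would verify directly that each amalgamated subgroup $H_s$ from \eqref{eq:-n-s} is the isotropy group of a suitable nonzero vector in $\cU^-$ and is maximal among the proper isotropy groups occurring there. Concretely, $H_s$ couples the spatio-temporal factor $D_1^z$ (which forces oddness in time and thus singles out the $\cU^-$ summand) with the reduced dihedral symmetry $D_{n_s} \le D_n$ obtained by passing to the rotation subgroup $\bz_{n_s} \le \bz_n$ of index $p_s$. Since $p_s$ is prime, $\bz_{n_s}$ is a maximal subgroup of $\bz_n$, and this is exactly what prevents any isotropy group strictly between $H_s$ and $G$ from occurring; this accounts for the role of the prime factorization of $n$ and is the substance of the ``one can easily see'' remark preceding the statement.

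Once $(H_s) \in \mathfrak M_G(\cU^-)$ is secured and the standing hypotheses \eqref{eq:tauj}, {\rm (A0)}--{\rm (A7)} and \eqref{eq:IsoA} are in force---the last of these guaranteeing via Corollary \ref{cor:non-singular-oper} that $\mathscr A$ is an isomorphism---the conclusion is immediate: the assumption that $\mathfrak m(H_s)$ is odd is precisely the numerical hypothesis of Theorem \ref{th:app3:main} for $H = H_s$, which therefore produces a periodic solution $u \in \mathscr E$ with $G_u = H_s$; non-constancy follows because $H_s \ge D_1^z$ (membership in $\mathfrak M_G(\cU^-)$ forces the oddness constraint). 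I expect the main obstacle to be the orbit-type maximality of the second paragraph, since it requires unwinding how the amalgamated notation of the Appendix encodes the interaction of the reversing, oddness, and dihedral symmetries; every remaining step is routine bookkeeping aligning the dihedral data with the hypotheses of the main theorem.
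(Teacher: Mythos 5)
Your proposal is correct and follows the paper's own route exactly: the paper likewise treats the corollary as an immediate consequence of Theorem \ref{th:app3:main} applied with $H = H_s$, after noting that each $\bfV_l$ is irreducible (so $m_l = 1$) and asserting, with no more detail than your maximal-subgroup argument via the prime index $p_s$, that $(H_s) \in \mathfrak M_G(\cU^-)$. The only difference is cosmetic: you spell out the maximality reasoning that the paper dismisses with ``one can easily see.''
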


\vs
\noi{\bf Example: case $m=4$ and $n=3$:} We  choose $a_0=-1$, $b_0=-2$, $a_1=a_4=-2$, $b_1=b_4=-4$, $a_2=a_3=-3$, $b_2=b_3=-5$. We will also assume that $\tau_j:= \frac{2\pi j}{5}$, $j=0,1,2,3,4$ (see condition  \eqref{eq:tauj}).  In this case, 
\begin{equation}\label{eq:gather}
\mu_0^0= -5, \quad \mu_1^0= -10, \quad \mu_2^0=-13, \quad \mu_3^0=-13, \quad \mu_4^0=-10
\end{equation}
\begin{equation}\label{eq:gather1}
%\mu_0^0= -5, \quad \mu_1^0= -10, \quad \mu_2^0=-13, \quad \mu_3^0=-13, \quad \mu_4^0=-10 \\
\mu_0^1= 1, \quad \mu_1^1=2 , \quad \mu_2^1=2, \quad \mu_3^1=2, \quad \mu_4^1=2.
\end{equation}
Notice that 
\[
\cos (\tau_1 k)=\begin{cases}
\frac{\sqrt 5-1}{4} &\text{ if }   k = 5p \pm 1; \\
\frac{-1-\sqrt 5}{4} & \text{ if } k= 5p \pm 3;  \\
1 &\text{ if } k=5p
\end{cases} 
\]
$(p = 0,1,... )$,
and the eigenvalues of the operator $\mathscr A$ are given by
\begin{align*}
\xi_{l,k} &=1+\frac { \mu_0^l + 2\mu_1^l\cos (\tau_1 k)+ 2\mu_2^l\cos(\tau_1 2k)+2\mu_2^l\cos(\tau_1 3k) +2\mu_1^l\cos(\tau_1 4k)-1}{k^2+1}\\
&=1+\frac { \mu_0^l + 4\mu_1^l\cos (\tau_1 k)+ 4\mu_2^l\cos(\tau_1 2k)-1}{k^2+1}\label{eq-nunu}
\end{align*}
(see \eqref{eq:gather}--\eqref{eq:gather1}).
Hence,
\[
\xi_{0,k}=\begin{cases}
1-\frac{98}{k^2+1} &\text{ if } k=5l,\\
1+\frac{17+3\sqrt 5}{k^2+1} &\text{ if } k=5l+1, 5l+2,\\
1+\frac{17-3\sqrt 5} {k^2+1} & \text{ if } k=5l+3,5l+4,
\end{cases}
\quad   \xi_{1,k}=\begin{cases}
1+\frac{16}{k^2+1} &\text{ if } k=5l,\\
1-\frac{16}{k^2+1} &\text{ if } k=5l+1, 5l+2,\\
1-\frac{16\sqrt 5} {k^2+1} & \text{ if } k=5l+3,5l+4.
\end{cases}
\]
Consequently, the negative spectrum of the operator $\mathscr A$ is:
\[
\sigma_-(\mathscr A)=\left\{ \xi_{0,0}=-97,\; \xi_{1,1}=-7, \; \xi_{1,2}=-\frac{11}5, \; \xi_{1,3}=-\frac{3}{5}  \right\}.
\]
Also, $G:=D_1\times \bz_2\times D_3$ and the related basic degrees are:
\begin{align*}
\deg_{\cU_0^-}&=(G) -(D_1^z\times D_3),\\
\deg_{\cU_0^+}&=(G) -(D_1\times D_3),\\
\deg_{\cU_1^-}&=(G) -(D_1^z\times D_1)-((D_1\times\bz_2)^{D_1^z}\times_{\bz_2}  ^{\{e\}}D_1)+(D_1^z\times \bz_1)\\   
%H{\prescript{Z}{}\times_{L}^{R}}K
\deg_{\cU_1^+}&=(G) -(D_1\times D_1)-((D_1\times\bz_2)^{D_1}\times_{\bz_2} ^{\{e\}} D_1)+(D_1^z\times \bz_1),
\end{align*}
where $\{e\}$ stands for the unit subgroup in $D_3$.
The maximal orbit types in $\mathscr E\setminus \{0\}$ are: 
\[
\mathfrak M_G(\mathscr E)=\left\{  (D_1^z\times D_3), \;   (D_1^z\times D_3),\;  ((D_1\times\bz_2)^{D_1^z}\times_{\bz_2} ^{\{e\}} D_1),\; ((D_1\times\bz_2)^{D_1}\times_{\bz_2} ^{\{e\}} D_1)  \right\}
\]
and 
\[
\mathfrak M_G(\cU^-)=\left\{  (D_1^z\times D_3), \;  ((D_1\times\bz_2)^{D_1^z}\times_{\bz_2} ^{\{e\}} D_1)\right\}.
\]
On the other hand, 
\[
\gdeg(\mathscr A,B_1(0))=\deg_{\cU_0^+}\cdot \deg_{\cU_1^+}^3\cdot \deg_{\cU_1^-}^3=\deg_{\cU_0^+}\cdot \deg_{\cU_1^+}\cdot \deg_{\cU_1^-},
\]
which implies that  for $H:=(D_1\times \bz_2)^{D_1^z}\times_{\bz_2} ^{\{e\}} D_1$, one has $\mathfrak m(H) = 1$. Hence, 
\[
\text{coeff}^H(\omega) = \text{coeff}^H\big((G)-\deg_{\cU_0^+}\cdot \deg_{\cU_1^+}\cdot \deg_{\cU_1^-}\big)=1.
\]
Therefore, there exists an orbit of non-constant periodic solutions to  system \eqref{eq:delay-eq} with the orbit type exactly $(H)$. 

\vs

To double check the obtained result, one can also  use the GAP package {\tt EquiDeg}, as it is presented below:

\vs

\noi{\small {\bf GAP Code:} $G:=D_1\times \bz_2\times  D_3$. 

\begin{lstlisting}[language=GAP, frame=single]
LoadPackage( "EquiDeg" );
gr1 := SymmetricGroup( 2 );
# create the product of D_1 and Z_2
gr2:= DirectProduct( gr1, gr1 );
# create group G
gr3 := pDihedralGroup( 3 ); 
G := DirectProduct( gr2, gr3 );
# create and name CCSs of gr2 and gr3
ccs_gr2:= ConjugacyClassesSubgroups( gr2 );
ccs_gr2_names := [ "Z1", "Z1p", "D1",  "D1z", "D1p"];
ccs_gr3:=ConjugacyClassesSubgroups( gr3 );
ccs_gr3_names:=["Z1", "D1", "Z3","D3"];
SetCCSsAbbrv(gr2, ccs_gr2_names); 
SetCCSsAbbrv(gr3, ccs_gr3_names);
ccs := ConjugacyClassesSubgroups( G );
# create characters of irreducible G-representations
irr := Irr( G );
# compute the corresponding to irr[k[] basic degree 
deg0m := BasicDegree( irr[6] );
deg0p := BasicDegree( irr[7] );
deg1m := BasicDegree( irr[9] ); 
deg1p := BasicDegree( irr[10] ); 
# obtaining amalgamation symbols
Print( AmalgamationSymbol(ccs[16]));
# maximal orbit types in E
max:=MaximalOrbitTypes(irr[6]+irr[7]+irr[9]+irr[10]); 
# unit element in Burnside ring AG
u := -BasicDegree( irr[1] );
# compute \gdeg  of F on Omega
deg:= u-deg0p*deg1p*deg1m;
\end{lstlisting}
}

\vs

\appendix
\section{Equivariant Brouwer Degree Background}
\label{subsec:G-degree}

\noi
{\bf (a) Amalgamated Notation.} 
Given two groups $G_{1}$ and
$G_{2}$, 
the well-known result of \'E. Goursat (see \cite{Goursat}) provides the following description of a
subgroup $\mathscr U \leq G_{1}\times G_{2}$: 
there exist subgroups
$H\leq G_{1}$ and $K\leq G_{2}$, a group $L$, and two epimorphisms
$\varphi:H\rightarrow L$ and $\psi:K\rightarrow L$ such that
\begin{equation*}
\mathscr U =\{(h,k)\in H\times K:\varphi(h)=\psi(k)\}.
\end{equation*}
The widely used notation for $\mathscr U$ is 
\begin{equation}\label{eq:amalgam-projections}
\mathscr U:=H\prescript{\varphi}{}\times_{L}^{\psi}K,
\end{equation}
in which case $H\prescript{\varphi}{}\times_{L}^{\psi}K$ is called an
\textit{amalgamated} subgroup of $G_{1}\times G_{2}$.

In this paper, we are interested in describing conjugacy classes of $\mathscr U$. Therefore, to make notation \eqref{eq:amalgam-projections} simpler and
self-contained, it is enough to indicate $L$,  
$Z=\text{Ker\thinspace}(\varphi)$ and 
$R=\text{Ker\thinspace}(\psi)$. Hence, instead of  
\eqref{eq:amalgam-projections}, we use the following notation:
\begin{equation}
\mathscr U=:H{\prescript{Z}{}\times_{L}^{R}}K~ \label{eq:amalg}.
\end{equation}

\vs\noi
{\bf (b) Equivariant Notation.} Below $G$ stands for a compact Lie group.
For a subgroup $H$ of $G$, 
denote by $N(H)$ the
normalizer of $H$ in $ G$ and by $W(H)=N(H)/H$ the Weyl group of $H$.  The symbol $(H)$ stands for the conjugacy class of $H$ in $G$. 
Put $\Phi(G):=\{(H): H\le G\}$.
The set $\Phi (G)$ has a natural partial order defined by 
$(H)\leq (K)$ iff $\exists g\in G\;\;gHg^{-1}\leq K$. 
Put $\Phi_0 (G):= \{ (H) \in \Phi(G) \; : \; \text{$W(H)$  is finite}\}$.

For a $G$-space $X$ and $x\in X$, denote by
$\mathcal G_{x} :=\{g\in G:gx=x\}$  the {\it isotropy group}  of $x$
and call $(G_{x})$   the {\it orbit type} of $x\in X$. Put $\Phi(G,X) := \{(H) \in \Phi_0(G) \; : \; 
(H) = (G_x) \; \text{for some $x \in X$}\}$ and  $\Phi_0(G,X):= \Phi(G,X) \cap \Phi_0(G)$. For a subgroup $H\leq G$, the subspace $
X^{H} :=\{x\in X:G_{x}\geq H\}$ is called the {\it $H$-fixed-point subspace} of $X$. If $Y$ is another $G$-space, then a continuous map $f : X \to Y$ is called {\it equivariant} if $f(gx) = gf(x)$ for each $x \in X$ and $g \in G$. 
Let $V$ be a finite-dimensional  $G$-representation (without loss of generality, orthogonal).
Then, $V$  decomposes into a direct sum 
\begin{equation}
V=V_{0}\oplus V_{1}\oplus \dots \oplus V_{r},  \label{eq:Giso}
\end{equation}
where each component $V_{i}$ is {\it modeled} on the
irreducible $G$-representation $\mathcal{V}_{i}$, $i=0,1,2,\dots ,r$, that is, $V_{i}$  contains all the irreducible subrepresentations of $V$
equivalent to $\mathcal{V}_{i}$. Decomposition  \eqref{eq:Giso}  is called  $G$\textit{-isotypic  decomposition of} $V$.
\vs \noi
{\bf (b) Axioms of Equivariant Brouwer Degree.} Denote by  $\mathcal{M}^{G}$ the set of all admissible $G$-pairs and let $A_0(G)$ stand for the Burnside ring of $G$ (see Introduction, items (a) and (b) respectively). The following result (cf.  \cite{AED}) can be considered as an axiomatic definition of the {\it $G$-equivariant Brouwer degree}.

\begin{theorem}
\label{thm:GpropDeg} There exists a unique map $\gdeg :\mathcal{M}
^{G}\to A_0(G)$, which assigns to every admissible $G$-pair $(f,\Omega)$ an
element $\gdeg(f,\Omega)\in A_0(G)$
\begin{equation}
\label{eq:G-deg0}\gdeg (f,\Omega)=\sum_{(H)}%
{n_{H}(H)}= n_{H_{1}}(H_{1})+\dots+n_{H_{m}}(H_{m}),
\end{equation}
satisfying the following properties:

\begin{itemize}
\item[] \textbf{(Existence)} If $\gdeg (f,\Omega)\ne
0$, i.e., $n_{H_{i}}\neq0$ for some $i$ in \eqref{eq:G-deg0}, then there
exists $x\in\Omega$ such that $f(x)=0$ and $(G_{x})\geq(H_{i})$.

\item[] \textbf{(Additivity)} Let $\Omega_{1}$ and $\Omega_{2}$
be two disjoint open $G$-invariant subsets of $\Omega$ such that
$f^{-1}(0)\cap\Omega\subset\Omega_{1}\cup\Omega_{2}$. Then,
\begin{align*}
\gdeg (f,\Omega)=\gdeg (f,\Omega_{1})+\gdeg 
(f,\Omega_{2}).
\end{align*}

\item[] \textbf{(Homotopy)} If $h:[0,1]\times V\to V$ is an
$\Omega$-admissible $G$-homotopy, then
\begin{align*}
\gdeg (h_{t},\Omega)=\mathrm{constant}.
\end{align*}

\item[] \textbf{(Normalization)} Let $\Omega$ be a $G$-invariant
open bounded neighborhood of $0$ in $V$. Then,
\begin{align*}
\gdeg (\id,\Omega)=(G).
\end{align*}

\item[] \textbf{(Multiplicativity)} For any $(f_{1},\Omega
_{1}),(f_{2},\Omega_{2})\in\mathcal{M} ^{G}$,
\begin{align*}
\gdeg (f_{1}\times f_{2},\Omega_{1}\times\Omega_{2})=
\gdeg (f_{1},\Omega_{1})\cdot \gdeg (f_{2},\Omega_{2}),
\end{align*}
where the multiplication `$\cdot$' is taken in the Burnside ring $A_0(G )$.

\item[] \textbf{(Recurrence Formula)} For an admissible $G$-pair
$(f,\Omega)$, the $G$-degree \eqref{eq:G-deg0} can be computed using the
following Recurrence Formula:
\begin{equation}
\label{eq:RF-0}n_{H}=\frac{\deg(f^{H},\Omega^{H})- \sum_{(K)>(H)}{n_{K}\,
n(H,K)\, \left|  W(K)\right|  }}{\left|  W(H)\right|  },
\end{equation}
where $\left|  X\right|  $ stands for the number of elements in the set $X$
and $\deg(f^{H},\Omega^{H})$ is the Brouwer degree of the map $f^{H}%
:=f|_{V^{H}}$ on the set $\Omega^{H}\subset V^{H}$.
\end{itemize}
\end{theorem}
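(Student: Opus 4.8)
The plan is to prove uniqueness and existence separately: uniqueness is essentially a formal consequence of the listed Recurrence Formula, while existence requires an explicit construction followed by verification of the remaining axioms. Throughout I use that $A_0(G)$ is the free $\bz$-module on the classes $(H)\in\Phi_0(G)$, so by definition any $\gdeg(f,\Omega)$ is supported on $\Phi_0(G)$, and that since $V$ is a finite-dimensional representation, only finitely many orbit types occur in $\overline{\Omega}$; hence the sum in \eqref{eq:G-deg0} is finite and the partially ordered set $\Phi_0(G,\overline{\Omega})$ has maximal elements.

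For uniqueness, suppose $\gdeg_1$ and $\gdeg_2$ both satisfy all the listed properties. Fix an admissible pair $(f,\Omega)$. The Recurrence Formula \eqref{eq:RF-0} expresses each coefficient $n_H$ in terms of the classical Brouwer degree $\deg(f^H,\Omega^H)$ and the coefficients $n_K$ with $(K)>(H)$. For a maximal $(H)\in\Phi_0(G,\overline{\Omega})$ the correcting sum is empty, so $n_H=\deg(f^H,\Omega^H)/|W(H)|$ is forced; descending induction along the partial order $\le$ on $\Phi_0(G)$ then pins down every coefficient. Since the classical Brouwer degree is itself unique, this gives $\gdeg_1(f,\Omega)=\gdeg_2(f,\Omega)$.

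For existence, I would turn the recurrence into a definition: set $d_H:=\deg(f^H,\Omega^H)$, which is well-defined because equivariance yields $f(V^H)\subseteq V^H$ and $f^H$ is non-vanishing on $\partial\Omega^H$, and define the $n_H$ by downward induction via \eqref{eq:RF-0}, well-foundedness being guaranteed by the finiteness noted above. The first genuine obstacle is \emph{integrality}: a priori \eqref{eq:RF-0} produces a rational number, and one must show $|W(H)|$ divides the numerator. I would establish this by reducing, through equivariant transversality and the equivariant approximation theorem, to a regular normal representative $\tilde f$ whose zero set is a finite union of orbits $G(x_i)$ with $(G_{x_i})\in\Phi_0(G)$. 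Inside the slice at such an orbit the Weyl group $W(H)$ acts freely on the set of zeros of isotropy \emph{exactly} $H$, so the net local count after subtracting the contributions of strictly larger types is a multiple of $|W(H)|$; this simultaneously yields integrality of $n_H$ and its geometric meaning as a signed count of orbits of type $(H)$.

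With the integers $n_H$ in hand, the remaining axioms follow. Normalization is immediate: for $f=\id$ the only zero is $0$ with $(G_0)=(G)$ and $d_G=1$. Additivity and Homotopy descend through the recurrence from the corresponding properties of each $\deg(f^H,\Omega^H)$. Existence is read off directly, since $n_H\neq0$ forces $d_K\neq0$ for some $(K)\ge(H)$, hence a genuine zero in $\Omega^K$ with isotropy $\ge(H)$. The step I expect to be most demanding is Multiplicativity: I would pass to regular normal representatives on each factor, use that the zero set of $f_1\times f_2$ is the product of the zero sets, and identify the orbit-type decomposition of a product orbit $G/G_{x_1}\times G/G_{x_2}$ with the Burnside-ring product \eqref{eq:mult}; matching the signed local counts on both sides gives the multiplicative formula in $A_0(G)$. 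Finally, the Recurrence Formula holds by the very definition of the $n_H$, which closes the circle with the uniqueness argument.
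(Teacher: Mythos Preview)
The paper does not actually prove this theorem: it is stated in the Appendix as background, introduced with ``The following result (cf.\ \cite{AED}) can be considered as an axiomatic definition of the $G$-equivariant Brouwer degree,'' and immediately afterwards the text simply names $\gdeg(f,\Omega)$ and moves on. So there is no in-paper proof to compare against; the result is imported wholesale from the monograph \cite{AED}.

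That said, your sketch is the standard route taken in \cite{AED} and related sources: uniqueness forced by descending induction through the Recurrence Formula, existence obtained by \emph{defining} the coefficients via \eqref{eq:RF-0} after passing to a generic (regular normal) equivariant approximation so that the zero set is a finite union of orbits with finite Weyl groups, integrality coming from the free $W(H)$-action on zeros of exact isotropy $(H)$, and the remaining axioms (Additivity, Homotopy, Normalization, Existence) inherited coefficientwise from the classical Brouwer degrees $\deg(f^H,\Omega^H)$. Your identification of Multiplicativity as the hardest step is accurate; the argument you outline---reducing to regular representatives and matching the orbit-type decomposition of $G/G_{x_1}\times G/G_{x_2}$ with the Burnside product \eqref{eq:mult}---is exactly how it is done. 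Nothing in your plan is wrong, but be aware that the genericity/approximation machinery (equivariant Thom transversality, the notion of regular normal map, and the slice theorem) is where the real work hides, and a full write-up is book-length rather than a paragraph.
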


The $\gdeg(f,\Omega)$ is 
 called the {\it $G$%
-equivariant  Brouwer degree of $f$ in $\Omega$}.

%-

\vs\noi
{\bf (c) Computation of Brouwer Equivariant Degree.} 
%Put $B(V):=\left\{  x\in V:\left|  x\right|  <1\right\}  $. For each
%irreducible $G$-representation $\mathcal{V} _{i}$, $i=0,1,2,\dots$, define
%\begin{align*}
%\deg_{\mathcal{V}_{i}}:=\gdeg (-\id,B(\mathcal{V} _{i})),
%\end{align*}
%and call it  
%the \emph{basic degree}.
Consider a $G$-equivariant linear isomorphism $T:V\to V$ and assume that $V$
has a $G$-isotypic  decomposition \eqref{eq:Giso}. Then, by the
Multiplicativity property,
\begin{equation}\label{eq:prod-prop}
\gdeg (T,B(V))=\prod_{i=0}^{r}\gdeg 
(T_{i},B(V_{i}))= \prod_{i=0}^{r}\prod_{\mu\in\sigma_{-}(T)} \left(
\deg_{\mathcal{V} _{i}}\right)  ^{m_{i}(\mu)}%
\end{equation}
where $T_{i}=T|_{V_{i}}$, $\sigma_{-}(T)$ denotes the real negative
spectrum of $T$ (i.e., $\sigma_{-}(T)=\left\{  \mu\in\sigma(T):\mu<0\right\})$ and $m_i(\mu) = \dim\big(E(\mu) \cap V_i\big)$
(here $E(\mu)$ stands for the generalized eigenspace of $T$ corresponding to $\mu$). 
Notice that the basic degrees can be effectively computed from \eqref{eq:RF-0}: 
\begin{align*}
\deg_{\mathcal{V} _{i}}=\sum_{(H)}n_{H}(H),
\end{align*}
where 
\begin{equation}\label{eq:bdeg-nL}
n_{H}=\frac{(-1)^{\dim\mathcal{V} _{i}^{H}}- \sum
_{H<K}{n_{K}\, n(H,K)\, \left|  W(K)\right|  }}{\left|  W(H)\right|}.
\end{equation}

\end{document}